\newtheorem{nota}[theorem]{Remark}
\newcommand{\email}[1]{\textsc{e-mail: }\texttt{#1}}
\newcommand{\racion}[2]{\mbox{\small$\frac{{#1}}{{#2}}$} }
\newcommand{\refe}[1]{(\ref{#1})}
\newcommand{\half}{\mbox{$\frac{1}{2}$}}
\newcommand{\dst}{\displaystyle}
\newcommand{\tres}{\mbox{$\frac{3}{2}$ }}
\newcommand{\Ry}{\Rightarrow}
\newcommand{\fhyp}{\mbox{F}}
\newcommand{\q}{\varsigma}
\newcommand{\pe}[2]{\langle#1,#2\rangle}
\newcommand\RR{\mathbb{R}}
\newcommand\CC{\mathbb{C}}
\renewcommand{\a}{\g a^{\downarrow}_0}
\renewcommand{\b}{\g a^{\uparrow}_0}
\newfont{\got}{eufm10 scaled \magstep1}
\newcommand{\g}[1]{\mbox{\got #1}}
\title{Factorization of the hypergeometric-type \\
difference equation on the uniform lattice}
\author{R. Alvarez-Nodarse%
\thanks{Departamento de An\'alisis Matem\'atico,
Universidad de Sevilla, Apdo. 1160. 41080-Sevilla,
Spain and
Instituto Carlos I de F\'{\i}sica Te\'orica y Computacional,
Universidad de Granada, E-18071 Granada, Spain.
\email{ran@us.es}} \and N. M. Atakishiyev%
\thanks{Instituto de Matem\'aticas, UNAM,  Apartado Postal 273-3,
C.P. 62210 Cuernavaca, Morelos, M\'exico.
\email{natig@matcuer.unam.mx}} \and R. S. Costas-Santos%
\thanks{Departamento de Matem\'aticas,
E.P.S., Universidad Carlos III de Madrid.
Ave. Universidad 30, E-28911, Legan\'es, Madrid,
Spain. \email{rcostas@math.uc3m.es}}}
\begin{document}
\maketitle
\pagestyle{myheadings}
\markboth{FACTORIZATION OF DIFFERENCE EQUATIONS}
{R. ALVAREZ-NODARSE, N.M. ATAKISHIYEV, R. COSTAS-SANTOS}
\begin{abstract}%
We discuss factorization of the hy\-per\-geo\-me\-tric-ty\-pe difference
equations on the uniform lattices and show how one can construct
a dynamical algebra, which corresponds to each of these equations.
Some examples are exhibited, in particular, we show that several
models of discrete harmonic oscillators, previously considered
in a number of publications, can be treated in a unified form.
\end{abstract}
\section{Introduction}
The study of discrete system has attracted the attention of many
authors in the last years. Of special interest are the discrete
analogs of the quantum harmonic oscillators \cite{asuslov1,asuslov2,
ata98,asuslov4,asuslov5,ruf1,vin1,vin2,ruf2,nag95} among others.

There are several methods for studying such systems. One of them
is the factorization method (FM), first introduced for solving
differential equations \cite{IH}. This classical FM is based on
the existence of the so-called raising and lowering  operators for
the corresponding equation, which allow to find the explicit
solutions in a simple way, see e.g. \cite{ata84,lor01}. Later on,  Miller
extended it to difference equations \cite{mil69}
and $q$-differences --in the Hahn sense-- \cite{mil70}.
In the case of difference equations this method has been also extensively
used during the last years (see e.g. \cite{ata98,asuslov4,ban98,lor01,smi99}
for difference analogs on the uniform lattice and \cite{ran-rob,asuslov1,asuslov2,ata98,
asuslov4,asuslov5,ban99} for the $q$-case).

Later on, references \cite{ata84,ataw94,ata94} indicated a way of
constructing the so-called ``dynamical symmetry algebra'' by applying the FM to
differential or difference equations \cite{ran03,asuslov4,asuslov5}
and then this technique has been used to consider some particular
instances of $q$-hypergeometric difference equations. Of special
interest is also the paper by Smirnov \cite{smi99}, in which the
equivalence of the FM and the Nikiforov {\it et al} formulation of
theory of $q$-orthogonal polynomials \cite{nsu}, was established.
In \cite{ran-rob}, following the papers \cite{ban99,lor01} for the classical case,
it has been shown that one can factorize the hyper\-geo\-me\-tric-ty\-pe
difference equation \refe{eqdif-q} in terms of the above-mentioned raising
and lowering operators.

Our main purpose here is to show how to deal with all different
cases of difference equations on the uniform lattice $x(s)=s$ in
an unified form. One should consider this paper as an attempt to
provide a background for the more general $q$-linear case (since
in the limit as $q$ goes to $1$, the $q$-linear case reduces to
the uniform one). Some results concerning this general case will be
also given in the last section.

The structure of the paper is as follows. In Section 2 some
necessary results on classical polynomials are collected. In section
\ref{sec-3} the factorization of the hy\-per\-geo\-me\-tric-ty\-pe
difference equation is discussed, which is used in section
\ref{sec-4} to construct a dynamical symmetry algebra in the case
of the Charlier polynomials. In section \ref{sec-5} the Kravchuk and
the Meixner cases are considered in detail. Finally, in section
\ref{sec-6} we briefly discuss a possibility of applying this
technique to the $q$-case.
\section{Preliminaries: the classical ``discrete'' polynomials}
\label{pre}
The discretization of the hypergeometric differential equation
on the lattice $x(s)$ \cite{nsu,nubook} leads to the {\it second
order difference equation of the hypergeometric type}
\begin{equation}
\begin{array}{c} \displaystyle
\sigma(s)\frac{\Delta}{\Delta x(s-1/2)}
\frac{\nabla y(x(s))}{\nabla x(s)}+
\tau(s)\frac{\Delta y(x(s))}{\Delta x(s)}+
\lambda y(x(s)) =0,
\end{array}
\label{eqdif-q}
\end{equation}
where $\Delta f(s):= f(s+1)-f(s)$, $\nabla f(s):= f(s)-f(s-1)$.

The most simple lattice is the \textit{uniform} one $x(s)=s$ and it
corresponds to the equation
\begin{equation}
\sigma(x)\Delta\nabla y(x)+\tau(x)\Delta y(x)+\lambda y(x)=0.
\label{eqdif-d}
\end{equation}
The above equation have polynomial solutions $P_n(s)$, usually
called classical \textit{discrete} orthogonal polynomials,
if and only if $\lambda=\lambda_n=-n(\tau'+(n-1)\sigma''/2)$.

It is well known \cite{nsu} that under certain conditions the
polynomial solutions of \refe{eqdif-d} are orthogonal. For example,
if $\sigma(s)\rho(s)x^{k}\Big|_{s=a,b}=0$, for all $k=0,1,2,\dots$,
then the polynomial solutions $P_n(x)$ of \refe{eqdif-d} satisfy
\begin{equation}\label{dis-ort}
\pe{P_n}{P_m}_d=
\sum_{s=a}^{b-1}P_n(s;q)\, P_m(s;q)\, \rho(s) =\delta_{nm} d_n^2,
\end{equation}
where the weight functions $\rho(s)$ are solutions of the
Pearson-type equation
\begin{equation}\label{pearson}
\Delta\left[\sigma(s) \rho(s)\right]= \tau(s) \rho(s)\quad
\mbox{or} \quad \sigma(s+1)\rho(s+1)=[\sigma(s)+\tau(s)]\rho(s).
\end{equation}
In the following we will consider the monic polynomials, i.e.,
$P_n(x)=x^n+b_n x^{n-1}\!+\!\!~\cdots$.

The polynomial solutions of \refe{eqdif-d} are the classical
\textit{discrete} orthogonal polynomials of Hahn, Meixner, Kravchuk
and Charlier and their principal data are given in the table
\ref{tabla-pol-dis}.

\begin{table}[h]
\caption{The classical \textit{discrete} orthogonal monic polynomials.}
\label{tabla-pol-dis}
\begin{center}
\renewcommand{\arraystretch}{.5}
{\small \hspace{-.5cm}
\begin{tabular}{|c||c|c|c|c|}
\hline
\ & \ & \ & \ &  \\
  \ & Hahn & Meixner  & Kravchuk &  Charlier \\
\ & \ & \ & \ &  \\
$P_n(x)$ & $h_n^{\alpha,\beta}(x;N)$ & $M_n^{\gamma,\mu}(x)$ &
$K_n^{p }(x)$ & $C_n^{ \mu}(x)$  \\
\ & \ & \ & \ &  \\
\hline \hline
 \ & \ & \ & \ &  \\
$\, [a,b] \, $ & $[0,N]$ & $[0,\infty)$ & $[0,N+1]$ & $[0,\infty)$ \\
 \ & \ & \ & \ & \\
\hline
\ & \ & \ & \ & \\
 $\sigma(x) $ &  $x(N+\alpha -x)$ & $x$  & $x$ & $x$ \\
\ & \ & \ & \ &  \\
\hline
\ & \ & \ & \ &  \\
$\tau(x)$ & $(\beta+1)(N-1)-(\alpha+\beta+2)x$  &
$(\mu-1)x+\mu\gamma$  &
 ${\frac{Np-x}{1-p}} $ & $\mu-x$ \\
\ & \ & \ & \ & \\
 \hline
 \ & \ & \ & &  \\
$\, \sigma+\tau$ & $(x+\beta+1)(N-1-x)$ &  $\mu x +\gamma\mu$ &
$ -{\frac{p}{1-p}}(x-N) $ & $\mu$ \\
 \ & \ & \ & & \\
\hline
\ & \ & \ & & \\
  $\lambda_n$& $ n(n+\alpha+\beta+1) $ & $(1-\mu)n $ &
$\frac{n}{1-p}$ & $n$ \\
\ & \ & \ & &\\
\hline
\ & \ & \ & \ &  \\
 $\rho(x)$& $\frac{\Gamma(N+\alpha-x)\Gamma(\beta+x+1) }
{\Gamma(N-x)\Gamma(x+1)}$ &
$\frac{\mu^x  \Gamma(\gamma+x)}{\Gamma(\gamma)\Gamma(x+1)}$  &
$ {N \choose x}p^x(1-p)^{N-x}$
&  $  \frac{e^{-\mu} \mu^x}{\Gamma(x+1)}$ \\
\ & \ & \ & \ &  \\
& \footnotesize{$\alpha,\beta \geq\!-\!1,\, n\leq N\!-\!1$} &
\footnotesize{$\gamma\!>0,\mu\in(0,1)$}  &
\footnotesize{$p\in(0,1),n\leq \!N\!-\!1$} &
\footnotesize{$\mu >0$}  \\
\ & \ & \ & \ & \\\hline
\ & \ & \ & &\\
$d_n^2$ &
$\frac{n!\Gamma(\alpha+\beta+N +n+1)}{(N-n-1)!(\alpha+\beta+n+1)_n}
{\alpha+\beta+2n+2 \choose \alpha+n+1}^{\!\!-\!1}$ &
$   \frac{n!(\gamma)_n \mu^n}{(1-\mu)^{\gamma+2n}}$  &
$ {N \choose n} p^n (1-p)^n$  & $n!\mu^n$   \\
\ & \ & \ & &\\
\hline
 \end{tabular}}
\end{center}
 \end{table}

They can be expressed in terms of the generalized hypergeometric
function ${}_p\fhyp _q$,
\begin{equation}
\begin{array}{l}
\displaystyle\hspace{0.2cm}_p
\fhyp _q\displaystyle
\bigg(\begin{array}{c}{a_1,a_2,\dots,a_p} \\
{b_1,b_2,\dots,b_q}\end{array}
\bigg|x\bigg)=
\displaystyle \sum _{k=0}^{\infty}\frac{ (a_1)_k(a_2)_k
\cdots (a_p)_k} {(b_1)_k(b_2)_k \cdots  (b_q)_k}
\frac{x^k}{k!} ,
\end{array}
\label{def-f-h}
\end{equation}
where $(a)_k$ is  the Pochhammer symbol (or shifted factorial)
\begin{equation}
(a)_0=1 ,\hspace{.25cm} (a)_k=a(a+1)(a+2) \cdots(a+k-1),\,\,
k=1,2,3,\dots \,\,\, .
\label{sim-poc}
\end{equation}
Using the above notations, we have for the monic polynomials of
Hahn, Meixner, Kravchuk and Charlier, respectively
\begin{equation}\begin{array}{c}
\hspace{-.3cm}h_n^{\alpha,\beta}(x,N)=
\dst  \frac{  (1-N)_n (\beta +1)_n }{ (\alpha+\beta+n +1)_n}
 {\dst \,_3\fhyp_2  \displaystyle
\bigg(\begin{array}{c}{-x, \alpha+\beta+n+1,-n}\\
{ 1-N, \beta+1}\end{array} \bigg| 1 \bigg),}
\end{array}\label{hyph}
\end{equation}
\begin{equation}
M_n^{\gamma,\mu}(x)=
\displaystyle \frac{ (\gamma)_{n}\mu^n}{(\mu-1)^n}
 \displaystyle\hspace{0.3cm}_2
\fhyp _1\displaystyle\bigg(\begin{array}{c}{-n,-x}\\{ \gamma}
\end{array} \bigg|1-\frac{1}{\mu}\bigg),
\label{hypm}
\end{equation}
\begin{equation}
K_n^{p}(x) =
\displaystyle  \frac{(-p)^n N!}{(N-n)!}
 \displaystyle\hspace{0.3cm}_2
\fhyp _1\displaystyle\bigg(\begin{array}{c} {-n,-x}\\ { -N}
\end{array} \bigg|
\frac{1}{p}\bigg),
\label{hypk}
\end{equation}
\begin{equation}
C_n^{\mu}(x)=
\displaystyle (-\mu)^n
 \displaystyle\hspace{0.3cm}_2
\fhyp _0\displaystyle
\bigg(\begin{array}{c} {-n,-x}\\{-}\end{array}\bigg|-
\frac{1}{\mu}\bigg).
\label{hypc}
\end{equation}

A further information on orthogonal polynomials on the uniform
lattice can be found in \cite{ran,ks,nsu,nubook}.

\section{Factorization of the difference equation}
\label{sec-3}
Let us consider the following second order linear difference
operator
\begin{equation} \label{ham1}
\g h_1(s)=-{\nu(s-1)}\,e^{-\partial_s}-
{\nu(s)}\,e^{\partial_s}+[2\sigma(s)+\tau(s)]I,
\end{equation}
where $e^{\alpha\partial s}f(s)=f(s+\alpha)$ for all $\alpha\in\CC$,
$\nu(s)=\sqrt{\sigma(s+1)[\sigma(s)+\tau(s)]}$, and $I$ is the
identity operator, and let  $(\Phi_n)_n$  be the set of functions
\begin{equation}
\Phi_n(s)= \frac{\sqrt{\rho(s)}}{d_n} \, P_n(s),
\label{nor-fun}
\end{equation}
where $d_n$ is a norm of the polynomials $P_n(s)$, which satisfy
equation \refe{eqdif-d}, and $\rho(s)$ is the solution of the
Pearson-type equation \refe{pearson}. If $P_n(s)$ possess the
discrete orthogonality property \refe{dis-ort}, then the
functions $\Phi_n(s)$ have the property
\begin{equation}
\pe{\Phi_n(s)}{\Phi_m(s)}_d=\sum_{s=a}^{b-1}{\Phi_n(s)}{\Phi_m(s)}
=\delta_{n,m}.
\label{ort-Phi}
\end{equation}

Using the identity $\nabla=\Delta-\nabla \Delta$ and the
equation \refe{eqdif-d}, one finds that
\begin{equation} \label{gen_ham2}
\g h_1(s)\Phi_n(s) =\lambda_n \Phi_n(s),
\end{equation}
i.e., the functions $\Phi_n(s)$, defined in \refe{nor-fun}, are the
eigenfunctions of $\g h_1(s)$.
In the following we will refer to $\g h_1(s)$ as
the {\em hamiltonian}.

Our first step is to find two operators $a(s)$ and $b(s)$ such that
the Hamiltonian $\g h_1(s)=b(s)a(s)$, i.e., the operators  $a(s)$
and $b(s)$ {\em factorize} the Hamiltonian $\g h_1(s)$.
\begin{definition}
Let $\alpha$ be a real number. We define a family of $\alpha$-down
and $\alpha$-up operators~by
\begin{equation} \label{alp_oper}
\begin{array}{l}
\!\! \dst \g a^{\downarrow}_\alpha(s)\!:=
e^{-\alpha \partial_s}\left( e^{\partial_s}
\sqrt{\sigma(s)} - \sqrt{\sigma(s)+\tau(s)}\,I
\right),
 \\[0.5cm]
\!\! \dst
\g a^{\uparrow}_\alpha(s)\!:=
\left(\sqrt{\sigma(s)} e^{-\partial_s} -
\sqrt{\sigma(s)+\tau(s)}\, I  \right) \!
e^{\alpha \partial_s},
\end{array}
\end{equation}
respectively.
\end{definition}

A straightforward calculation (by using the simple identity
$e^{\partial_s}\,\nabla = \Delta$) shows that for all
$\alpha\in\RR$
$$
\g h_1(s)=\g a^{\uparrow}_\alpha(s)\g a^{\downarrow}_\alpha(s),
$$
i.e., the operators $\g a^{\downarrow}_\alpha(s)$ and
$\g a^{\uparrow}_\alpha(s)$ factorize the Hamiltonian, defined
in (\ref{ham1}). Thus, we have the following
\begin{theorem}
Given a Hamiltonian  $\g h_1(s)$, defined by \refe{ham1}, the
operators $\g a^{\downarrow}_\alpha(s)$ and
$\g a^{\uparrow}_\alpha(s)$, defined in \refe{alp_oper}, are such
that for all $\alpha\in\CC$, the relation
$\g h_1(s)=\g a^{\uparrow}_\alpha(s)\g a^{\downarrow}_\alpha(s)$
holds.
\end{theorem}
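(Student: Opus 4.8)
The plan is to reduce the claim to the single case $\alpha=0$ and then verify that identity by a direct expansion of the operator product. First I would note that the parameter $\alpha$ enters only through the outer shift factors: writing $\g a^{\downarrow}_\alpha(s)=e^{-\alpha\partial_s}\,\a(s)$ and $\g a^{\uparrow}_\alpha(s)=\b(s)\,e^{\alpha\partial_s}$, where $\a(s)$ and $\b(s)$ denote the operators obtained by setting $\alpha=0$ in \refe{alp_oper}, the two inner shifts meet in the middle of the composition and cancel,
\[
\g a^{\uparrow}_\alpha(s)\,\g a^{\downarrow}_\alpha(s)=\b(s)\,e^{\alpha\partial_s}\,e^{-\alpha\partial_s}\,\a(s)=\b(s)\,\a(s),
\]
because $e^{\alpha\partial_s}e^{-\alpha\partial_s}=I$ for every $\alpha\in\CC$. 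Hence it is enough to establish $\g h_1(s)=\b(s)\,\a(s)$, which no longer involves $\alpha$.

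Next I would expand $\b(s)\,\a(s)$ by applying it to a test function $f(s)$. The only rule needed is that a shift operator displaces the argument of any multiplication operator it passes, $e^{\pm\partial_s}g(s)=g(s\pm1)\,e^{\pm\partial_s}$; keeping careful track of operator ordering, the composite produces exactly three terms, proportional to $f(s-1)$, $f(s)$ and $f(s+1)$. A short computation gives the coefficient $-\sqrt{\sigma(s)\,[\sigma(s-1)+\tau(s-1)]}$ for the backward shift, the coefficient $2\sigma(s)+\tau(s)$ for the identity term, and $-\sqrt{\sigma(s+1)\,[\sigma(s)+\tau(s)]}$ for the forward shift.

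Finally I would match these coefficients against \refe{ham1} using the definition $\nu(s)=\sqrt{\sigma(s+1)\,[\sigma(s)+\tau(s)]}$. The forward-shift coefficient is then $-\nu(s)$ by definition, and replacing $s$ by $s-1$ in that same definition identifies the backward-shift coefficient with $-\nu(s-1)$; the identity term already reads $2\sigma(s)+\tau(s)$. These are precisely the three terms of $\g h_1(s)$, so $\b(s)\,\a(s)=\g h_1(s)$ and, combined with the first step, $\g h_1(s)=\g a^{\uparrow}_\alpha(s)\,\g a^{\downarrow}_\alpha(s)$ for all $\alpha\in\CC$.

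I do not anticipate any genuine difficulty: the statement is a bookkeeping identity, and the author's remark that it follows ``by using the simple identity $e^{\partial_s}\nabla=\Delta$'' is just a compact way of organizing the same expansion. The only place demanding care is the noncommutativity of the shifts with the square-root multiplications, which forces the arguments $\sigma(s\pm1)$ and $\sigma(s-1)+\tau(s-1)$ to appear; the single substantive recognition is that $\sqrt{\sigma(s)\,[\sigma(s-1)+\tau(s-1)]}$ is nothing but $\nu(s-1)$.
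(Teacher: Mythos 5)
Your proposal is correct and follows essentially the same route as the paper, which simply asserts the identity as ``a straightforward calculation'': expanding $\g a^{\uparrow}_\alpha(s)\g a^{\downarrow}_\alpha(s)$, noting that the outer shifts $e^{\alpha\partial_s}e^{-\alpha\partial_s}=I$ cancel so the product reduces to the $\alpha=0$ case, and matching the three resulting coefficients $-\sqrt{\sigma(s)[\sigma(s-1)+\tau(s-1)]}=-\nu(s-1)$, $2\sigma(s)+\tau(s)$, and $-\sqrt{\sigma(s+1)[\sigma(s)+\tau(s)]}=-\nu(s)$ against \refe{ham1}. Your careful handling of the noncommutativity of shifts with multiplication operators is exactly the bookkeeping the paper compresses into the remark about $e^{\partial_s}\nabla=\Delta$.
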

\section{The dynamical algebra: The Charlier case}\label{sec-4}
Our next step is to find a dynamical symmetry algebra, associated
with the operator $\g h_1(s)$, or, equivalently, with the
corresponding family of polynomials, i.e.,
\textit{To find two operators $a(s)$ and $b(s)$, that factorize
the hamiltonian $\g h_1(s)$, i.e., $\g h_1(s)=b(s)a(s)$, and are
such that its commutator $[a(s),b(s)]=a(s)b(s)-b(s)a(s)=I$, where
$I$ denotes the identity operator.}

\begin{theorem} \label{el-teo}
Let  $\g h_1(s)$ be the hamiltonian, defined
in \refe{ham1}. The operators $b(s)=\g a^{\uparrow}_\alpha(s)$
and $a(s)=\g a^{\downarrow}_\alpha(s)$, given in  \refe{alp_oper},
factorize the Hamiltonian $\g h_1(s)$ \refe{ham1} and satisfy
the commutation relation $[a(s),b(s)]=\Lambda$ for a certain
complex number $\Lambda$, if and only if the following two
conditions hold:
\begin{equation} \label{first_cond_lin}
{\frac{\sigma(s-\alpha)[\sigma(s-\alpha)+\tau(s-\alpha)]}
{\sigma(s)[\sigma(s-1)+\tau(s-1)]}}=1
\end{equation}
and
\begin{equation} \label{second_cond_lin}
\sigma(s-\alpha+1)+ \sigma(s-\alpha)+\tau(s-\alpha)-
2\sigma(s)-\tau(s)=\Lambda.
\end{equation}
\end{theorem}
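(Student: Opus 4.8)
The plan is to reduce the whole statement to a single operator computation. By the factorization theorem of Section~\ref{sec-3} we already know that $b(s)a(s)=\g a^{\uparrow}_\alpha(s)\,\g a^{\downarrow}_\alpha(s)=\g h_1(s)$ for the given $\alpha$, so the commutator is
\[
[a(s),b(s)]=a(s)b(s)-b(s)a(s)=\g a^{\downarrow}_\alpha(s)\,\g a^{\uparrow}_\alpha(s)-\g h_1(s).
\]
Hence everything hinges on evaluating the \emph{reversed} product $\g a^{\downarrow}_\alpha(s)\,\g a^{\uparrow}_\alpha(s)$ and comparing it term by term with the known tridiagonal form of $\g h_1(s)$ in \refe{ham1}.

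First I would apply $\g a^{\downarrow}_\alpha(s)\,\g a^{\uparrow}_\alpha(s)$ to an arbitrary function $f(s)$ on the lattice, pushing every shift to the right by means of $e^{\beta\partial_s}f(s)=f(s+\beta)$ and collecting the shifted square-root coefficients (the same bookkeeping that already proved the factorization $\g h_1=\g a^{\uparrow}_\alpha\g a^{\downarrow}_\alpha$). Since each factor is a two-term operator, the composition is again tridiagonal; writing $\mu_\alpha(s):=\sqrt{\sigma(s-\alpha)[\sigma(s-\alpha)+\tau(s-\alpha)]}$, the cross terms produce the off-diagonal weights and the diagonal collects the squared contributions, so that
\[
\g a^{\downarrow}_\alpha(s)\,\g a^{\uparrow}_\alpha(s)f(s)=-\mu_\alpha(s+1)\,f(s+1)+\big[\sigma(s-\alpha+1)+\sigma(s-\alpha)+\tau(s-\alpha)\big]f(s)-\mu_\alpha(s)\,f(s-1).
\]
Because $\g h_1(s)$ is tridiagonal with weights $-\nu(s)$, $2\sigma(s)+\tau(s)$ and $-\nu(s-1)$, subtracting it leaves a tridiagonal operator whose three coefficient functions I can read off directly.

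To finish I would use the elementary fact that a difference operator $c_{1}(s)e^{\partial_s}+c_{0}(s)I+c_{-1}(s)e^{-\partial_s}$ equals $\Lambda I$ if and only if $c_{\pm1}\equiv 0$ and $c_0\equiv\Lambda$; the ``only if'' direction follows from the linear independence of the shifts, which one sees by testing on Kronecker-delta sequences. Setting the $f(s+1)$ and $f(s-1)$ coefficients of the commutator to zero gives $\nu(s)=\mu_\alpha(s+1)$ and $\nu(s-1)=\mu_\alpha(s)$, i.e. $\sigma(s+1)[\sigma(s)+\tau(s)]=\sigma(s-\alpha+1)[\sigma(s-\alpha+1)+\tau(s-\alpha+1)]$ and $\sigma(s)[\sigma(s-1)+\tau(s-1)]=\sigma(s-\alpha)[\sigma(s-\alpha)+\tau(s-\alpha)]$. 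The second is precisely \refe{first_cond_lin}, while the first is its image under $s\mapsto s-1$, so as identities in $s$ the two off-diagonal conditions are equivalent and collapse to the single constraint \refe{first_cond_lin}. The remaining requirement that the diagonal coefficient $\sigma(s-\alpha+1)+\sigma(s-\alpha)+\tau(s-\alpha)-2\sigma(s)-\tau(s)$ equal the constant $\Lambda$ is exactly \refe{second_cond_lin}.

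The main obstacle is organizational rather than conceptual: keeping the argument-shifts of the nested square roots straight while composing the operators in the reversed order, so that the off-diagonal weights of $\g a^{\downarrow}_\alpha(s)\,\g a^{\uparrow}_\alpha(s)$ emerge cleanly as $\mu_\alpha(\cdot)$. Once that reduction is in hand, the only genuinely substantive observation is that the two off-diagonal vanishing conditions are related by $s\mapsto s-1$ and hence reduce to the single condition \refe{first_cond_lin}; the constancy of the diagonal term then yields \refe{second_cond_lin} with no further computation.
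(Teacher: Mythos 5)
Your proposal is correct and follows essentially the same route as the paper's proof: both compute the reversed product $\g a^{\downarrow}_\alpha(s)\,\g a^{\uparrow}_\alpha(s)$ as a tridiagonal operator, compare it coefficient-by-coefficient with $\g h_1(s)=\g a^{\uparrow}_\alpha(s)\,\g a^{\downarrow}_\alpha(s)$, and note that the two off-diagonal vanishing conditions are shift-equivalent (the paper phrases this via the ratio identity $A_1(s)/B_1(s)=A_2(s+1)/B_2(s+1)$), so that everything collapses to \refe{first_cond_lin} together with the diagonal condition \refe{second_cond_lin}. Your explicit appeal to linear independence of the shift operators (testing on delta sequences) to justify the ``only if'' direction is a point the paper leaves implicit, but the argument is the same.
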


\begin{proof} Taking the expression for the operators
$a_\alpha^\uparrow(s)$ and $a_\alpha^\downarrow(s)$,
a straightforward
calculation shows that $a_\alpha^\downarrow(s)a_\alpha^\uparrow(s)=
A_1(s)e^{\partial_s}+A_2(s)e^{-\partial_s} +A_3(s)I$, where
\begin{equation}\label{help1}
\begin{array}{l}
\dst A_1(s)=
-\sqrt{\sigma(s+1-\alpha)[\sigma(s-\alpha+1)+\tau(s-\alpha+1)},
\\[0.5cm]
\dst A_2(s)=
-\sqrt{\sigma(s-\alpha)[\sigma(s-\alpha)+\tau(s-\alpha)},\\[0.5cm]
\dst A_3(s)=\sigma(s+1-\alpha)+\sigma(s-\alpha)+\tau(s-\alpha).
\end{array}
\end{equation}
In the same way,
$a_\alpha^\uparrow(s)a_\alpha^\downarrow(s)=\g h_1(s)=
B_1(s)e^{\partial_s}+B_2(s)e^{-\partial_s}+B_3(s)I$, where
\begin{equation}\label{help2}
\begin{array}{l}
\dst B_1(s)=-\nu(s),\quad
B_2(s)=-\nu(s-1),\quad
\dst B_3(s)=2\sigma(s)+\tau(s).
\end{array}
\end{equation}
Consequently,
\begin{equation}\label{help3}
[a_\alpha^\downarrow(s),a_\alpha^\uparrow(s)]=
\Big(A_1(s)-B_1(s)\Big)e^{\partial_s}+\Big(A_2(s)-
B_2(s)\Big)e^{-\partial_s} +\Big(A_3(s)-B_3(s)\Big)I.
\end{equation}
To eliminate the two terms in the right-hand side of \refe{help3},
which are proportional to $\exp(\pm \partial_s)$, one have to
require that $ A_1(s) - B_1(s)=0$ and $A_2(s)-B_2(s)=0$. But
${A_1(s)}/{B_1(s)}={A_2(s+1)}/{B_2(s+1)}$, hence, the requirement
that $A_1(s)=B_1(s)$ entails the relation $A_2(s)=B_2(s)$,
and vice versa. Thus, from \refe{help3} it follows that the
commutator $[a_\alpha^\uparrow(s),a_\alpha^\downarrow(s)]=\Lambda$,
iff 
$A_1(x)=B_1(s)$ and $A_3(s)-B_3(s)=\Lambda$.
\end{proof}

Using the main data for the discrete polynomials (see table
\ref{tabla-pol-dis}), we see that the only possible solution of
the problem 1 corresponds to the case when
$\sigma(s)+\tau(s)=const.$ and $\alpha=0$
i.e., the Charlier polynomials. Moreover, in this case
$\lambda_n=n$. 
\begin{corollary}
For the hamiltonian, associated with the Charlier polynomials,
$$
\g h_1^C(s)=-\sqrt{s\mu}\,e^{-\partial_s}-
\sqrt{(s+1)\mu}\,e^{\partial_s}+(s+\mu)I,
$$
$$
\g h_1^C(s)\Phi_n^C(s)=n \Phi_n^C(s),\qquad \Phi_n^C(s)=
\sqrt{\frac{e^{-\mu}\mu^{s-n}}{s!\,n!}}\,
C_n^\mu(s),\quad \mu>0,\quad n=0,1,2,\dots.
$$
Furthermore, the operators
\begin{equation}
\g a^{\downarrow}_0(s)=
 \sqrt{s+1}\,e^{\partial_s} - \sqrt{\mu} \,I, \qquad
 \dst \g a^{\uparrow}_0(s)=
\sqrt{s} e^{-\partial_s} -\sqrt{\mu}\,I,
\end{equation}
are such that $\g h_1^C=\b(s)\a(s)$ and
$[a_0^\downarrow(s),a_0^\uparrow(s)]=1$.
\end{corollary}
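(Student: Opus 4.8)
The plan is to obtain every assertion by specializing the general machinery of Sections~\ref{sec-3}--\ref{sec-4} to the Charlier data, which I read off from the last column of Table~\ref{tabla-pol-dis}: namely $\sigma(s)=s$ and $\tau(s)=\mu-s$, so that $\sigma(s)+\tau(s)=\mu$ is \emph{constant}, together with $\lambda_n=n$, $\rho(s)=e^{-\mu}\mu^s/\Gamma(s+1)$, $d_n^2=n!\,\mu^n$, and $P_n(s)=C_n^\mu(s)$. First I would substitute these into the definition \refe{ham1} of the Hamiltonian. Since $\nu(s)=\sqrt{\sigma(s+1)[\sigma(s)+\tau(s)]}=\sqrt{(s+1)\mu}$, $\nu(s-1)=\sqrt{s\mu}$, and $2\sigma(s)+\tau(s)=2s+(\mu-s)=s+\mu$, this reproduces exactly the stated $\g h_1^C(s)$.

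For the eigenvalue equation I would insert $\rho$, $d_n$ and $C_n^\mu$ into the normalized-eigenfunction formula \refe{nor-fun}. The only point requiring care is the bookkeeping of the powers of $\mu$: one has $\sqrt{\rho(s)}/d_n=\sqrt{e^{-\mu}\mu^s/s!}\,/\sqrt{n!\,\mu^n}$, and combining the $\mu$ factors gives precisely $\Phi_n^C(s)=\sqrt{e^{-\mu}\mu^{s-n}/(s!\,n!)}\,C_n^\mu(s)$. The relation $\g h_1^C(s)\Phi_n^C(s)=n\,\Phi_n^C(s)$ is then nothing but the general identity \refe{gen_ham2} evaluated at $\lambda_n=n$.

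Next I would specialize the operators \refe{alp_oper} to $\alpha=0$. The subtlety here is purely operator-algebraic: $e^{\partial_s}\sqrt{\sigma(s)}$ and $\sqrt{\sigma(s)}\,e^{-\partial_s}$ must be put into normal form by commuting each shift past the multiplication by $\sqrt{\sigma(s)}=\sqrt{s}$, using $e^{\pm\partial_s}f(s)=f(s\pm1)$. This yields $\a(s)=\sqrt{s+1}\,e^{\partial_s}-\sqrt{\mu}\,I$ and $\b(s)=\sqrt{s}\,e^{-\partial_s}-\sqrt{\mu}\,I$, as claimed. The factorization $\g h_1^C=\b(s)\a(s)=\g a^\uparrow_0(s)\g a^\downarrow_0(s)$ is then immediate from the general identity $\g h_1(s)=\g a^\uparrow_\alpha(s)\g a^\downarrow_\alpha(s)$ established before Theorem~\ref{el-teo}.

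Finally, for the commutation relation I would simply check that the Charlier data with $\alpha=0$ satisfy the two hypotheses of Theorem~\ref{el-teo}. Because $\sigma(s)+\tau(s)\equiv\mu$, the numerator and denominator of condition \refe{first_cond_lin} both equal $\sigma(s)\,\mu=s\mu$, so the ratio is identically $1$; and in condition \refe{second_cond_lin} the terms $\sigma(s)+\tau(s)$ cancel, leaving $\sigma(s+1)-\sigma(s)=(s+1)-s=1$, so that $\Lambda=1$. Theorem~\ref{el-teo} then gives $[a_0^\downarrow(s),a_0^\uparrow(s)]=1$, completing the corollary. I do not anticipate any genuine obstacle: the entire argument is a direct substitution into results already proved, and the only places demanding attention are the two computations just highlighted, namely the power-of-$\mu$ normalization in $\Phi_n^C$ and the normal-ordering of the shift operators when setting $\alpha=0$.
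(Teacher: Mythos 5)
Your proposal is correct and follows essentially the same route as the paper: the corollary is stated there without separate proof precisely because it is the direct specialization of the Hamiltonian \refe{ham1}, the normalized functions \refe{nor-fun}, the operators \refe{alp_oper} with $\alpha=0$, and Theorem \ref{el-teo} to the Charlier data $\sigma(s)=s$, $\tau(s)=\mu-s$ (so $\sigma+\tau\equiv\mu$, $\Lambda=\sigma(s+1)-\sigma(s)=1$), which is exactly what you carried out. Your two flagged computations (the power-of-$\mu$ bookkeeping in $\Phi_n^C$ and the normal ordering $e^{\partial_s}\sqrt{s}=\sqrt{s+1}\,e^{\partial_s}$) are both right.
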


Notice that, since $\g h_1(s)\Phi(s) = \lambda \Phi(s)$,
\begin{equation*}\begin{split}
\g h_1(s) \{\a(s) \Phi(s)\}&=\b(s)\a(s)\{\a(s)\Phi(s)\}
=(\a(s)\b(s)-1)\{\a(s)\Phi(s)\}\\&= (\lambda-1)\{\a(s) \Phi(s)\},\\
\g h_1(s) \{\b(s) \Phi(s)\}  & = \b(s)\a(s)\b(s) \Phi(s)
=\b(s)(\lambda+1)\Phi(s)\\ & =(\lambda+1)\{\b(s) \Phi(s)\}.
\end{split}
\end{equation*}
In other words, if $\Phi(s)$ is an eigenvector of the hamiltonian
$\g h_1(s)$, then $\a(s) \Phi(s)$ is the eigenvector of $\g h_1(s)$,
associated with the eigenvalue $\lambda-1$, and $\b(s)\Phi(s)$ is
the eigenvector of $\g h_1(s)$, associated with the eigenvalue
$\lambda+1$. In general then $[\a]^k(s) \Phi(s)$ and
$[\b]^k(s)\Phi(s)$ are also eigenvectors corresponding to the
eigenvalues $\lambda-k$ and $\lambda+k$, respectively.\\

Using the preceding formulas for the Charlier polynomials, one finds
\begin{equation}
\b(s) \Phi_n^C(s)= U_n \Phi_{n+1}^C(s),\qquad
\a(s) \Phi_n^C(s)= D_n \Phi_{n-1}^C(s),
\label{cha-low-rai}
\end{equation}
where $U_n$ and $D_n$ are some constants.\\

If we now apply $\b(s)$ to the first equation of \refe{cha-low-rai}
and then use the second one and \refe{gen_ham2}, we find that
$\lambda_n=D_n U_{n-1}$. On the other hand, applying $\a(s)$
to the second equation in \refe{cha-low-rai} and using the first
one, as well as the fact that $\a(s)\b(s)\Phi_n^C(s)=
(\lambda_{n}+1)\Phi_n^C(s)$, one obtains that
$1+\lambda_n=U_nD_{n+1}=\lambda_{n+1}$, from which it follows
that $\lambda_n$ should be a linear function of $n$ (that
is also obvious from table \ref{tabla-pol-dis}).

If we use the boundary conditions $\sigma(s)\rho(s)\big|_{s=a,b}=0$,
as well as the formula of summation by parts, we obtain
$$
\pe{\g a^{\downarrow}_0(s)\Phi_{m}(s)}{\Phi_n(s)}_d=
\pe{\Phi_{m}(s)}{\g a^{\uparrow}_0(s)\Phi_{n}(s)}_d,
$$
i.e., the operators $\a(s)$ and $\b(s)$ are mutually adjoint.

{}From the above equality (the adjointness property) and
\refe{cha-low-rai} it follows that $D_{n+1}=U_n$, thus
$U_n^2=\lambda_{n+1}$, therefore
$U_n=\sqrt{\lambda_{n+1}}$ and $D_n=\sqrt{\lambda_{n}}$\,, i.e.,
we have the following

\begin{corollary}
The operators $\b(s)$ and $\a(s)$ are mutually adjoint
with respect to the inner product $\pe{\cdot}{\cdot}_d$ and
\begin{equation}\begin{split}
\b(s)\Phi_n^C(s)&=  \left(\sqrt{s}\,e^{\partial_s} -
\sqrt{\mu} \,I\right)
\Phi_n^C(s)= \sqrt{n+1}\, \Phi_{n+1}^C(s),\\[1mm]
\a(s)\Phi_n^C(s)&=  \left(\sqrt{s+1}\,e^{\partial_s} -
\sqrt{\mu} \,I\right)
\Phi_n^C(s)= \sqrt{n}\, \Phi_{n-1}^C(s).
\end{split}
\label{cha-low-rai1}
\end{equation}
\end{corollary}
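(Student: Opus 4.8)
The plan is to separate the two claims — mutual adjointness and the explicit ladder action — and to assemble them from the ingredients already in place before the statement. Most of the algebra (the relations $\lambda_n=D_nU_{n-1}$ and $\lambda_{n+1}=U_nD_{n+1}$, together with the consequence $U_n=\sqrt{\lambda_{n+1}}$, $D_n=\sqrt{\lambda_n}$) has been derived in the preceding discussion, so the genuine work left is (i) the summation-by-parts identity underlying adjointness and (ii) the fixing of the signs of the square roots.

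For adjointness I would write $\pe{\a(s)\Phi_m}{\Phi_n}_d$ as $\sum_{s=0}^{\infty}[\sqrt{s+1}\,\Phi_m(s+1)-\sqrt{\mu}\,\Phi_m(s)]\,\Phi_n(s)$, split off the forward-shift part, and change the summation index by $t=s+1$. The coefficient $\sqrt{s+1}$ then becomes $\sqrt{t}$, which vanishes at $t=0$, so the shifted sum may be started at $t=0$ without any boundary contribution; recombining the pieces yields $\sum_{s=0}^{\infty}\Phi_m(s)\,[\sqrt{s}\,\Phi_n(s-1)-\sqrt{\mu}\,\Phi_n(s)]=\pe{\Phi_m}{\b(s)\Phi_n}_d$. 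This is precisely the summation by parts quoted in the text, the vanishing boundary term being guaranteed by $\sigma(s)\rho(s)\big|_{s=0,\infty}=0$; for the Charlier weight this is transparent, since $s\,e^{-\mu}\mu^s/s!\to 0$ as $s\to\infty$ and is zero at $s=0$. With $\a$ and $\b$ thus mutually adjoint, evaluating the constant $U_n$ of \refe{cha-low-rai} on the orthonormal system gives $U_n=\pe{\b\Phi_n^C}{\Phi_{n+1}^C}_d=\pe{\Phi_n^C}{\a\Phi_{n+1}^C}_d=D_{n+1}$, whence $U_n^2=U_nD_{n+1}=\lambda_{n+1}=n+1$ and $D_n^2=\lambda_n=n$.

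The one genuinely delicate point — and the step I expect to be the main obstacle — is the sign, since the relations above determine only $U_n^2$ and $D_n^2$. I would fix it by direct computation on the explicit functions. Substituting $\Phi_n^C(s)=\sqrt{e^{-\mu}\mu^{s-n}/(s!\,n!)}\;C_n^\mu(s)$ collapses the common weight factors and reduces $\b(s)\Phi_n^C=\sqrt{n+1}\,\Phi_{n+1}^C$ to the monic Charlier identity $s\,C_n^\mu(s-1)-\mu\,C_n^\mu(s)=C_{n+1}^\mu(s)$, and likewise reduces $\a(s)\Phi_n^C=\sqrt{n}\,\Phi_{n-1}^C$ to the forward-difference relation $C_n^\mu(s+1)-C_n^\mu(s)=n\,C_{n-1}^\mu(s)$. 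Both are standard and can be checked from \refe{hypc}; they hold with the positive root for every $n$ (the base case $\b\Phi_0^C=\Phi_1^C$, $\a\Phi_0^C=0$ already singling out $U_0=+1$, $D_0=0$). Inserting $\lambda_n=n$ then gives $U_n=\sqrt{n+1}$ and $D_n=\sqrt{n}$, which is the assertion.
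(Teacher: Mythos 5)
Your proof is correct and follows essentially the same route as the paper: the ladder relations $\b(s)\Phi_n^C=U_n\Phi_{n+1}^C$, $\a(s)\Phi_n^C=D_n\Phi_{n-1}^C$ with the already-derived identities $\lambda_n=D_nU_{n-1}$ and $\lambda_{n+1}=U_nD_{n+1}$, adjointness by summation by parts (which you spell out explicitly, index shift and vanishing boundary term included, where the paper only invokes it), and then $U_n=D_{n+1}$, $U_n^2=\lambda_{n+1}=n+1$. The one place you genuinely go beyond the paper is the sign: the paper passes from $U_n^2=\lambda_{n+1}$ to $U_n=\sqrt{\lambda_{n+1}}$ without comment, whereas you correctly note that the abstract chain fixes $U_n$, $D_n$ only up to sign, and you settle it by reducing the ladder formulas to the monic Charlier identities $s\,C_n^\mu(s-1)-\mu\,C_n^\mu(s)=C_{n+1}^\mu(s)$ and $C_n^\mu(s+1)-C_n^\mu(s)=n\,C_{n-1}^\mu(s)$; both reductions and both identities are correct. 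Two remarks: first, once you have those two explicit identities, the entire corollary except adjointness follows from them alone, so the $U_n$, $D_n$ bookkeeping becomes redundant in your write-up (you could present either the abstract argument plus a sign fix, or the direct computation, but you do not need both in full); second, a lighter way to fix the sign within the abstract argument is a leading-coefficient comparison: for the Charlier weight $\rho(s-1)=(s/\mu)\rho(s)$, so $\b(s)\Phi_n^C(s)=\frac{\sqrt{\rho(s)}}{d_n\sqrt{\mu}}\left[s\,P_n(s-1)-\mu P_n(s)\right]$ with $s\,P_n(s-1)-\mu P_n(s)$ monic of degree $n+1$, whence $U_n=d_{n+1}/(d_n\sqrt{\mu})>0$ automatically.
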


{}From the above corollary one can deduce that
$$
\sqrt{s+1}\,\Phi_0^C(s+1)-\sqrt{\mu}\,\Phi_0^C(s)=0\quad\Ry
\quad \Phi_0^C(s)=N_0\sqrt{\frac{\mu^s}{s!}}.
$$
Using the orthonormality of $\Phi_0^C(s)$, one obtains that
$N_0=e^{-\mu/2}$. Thus
$$
\Phi_n^C(s)=\frac{1}{\sqrt{n!}}\, [\b(s)]^n \Phi_0^C(s)
=\frac{1}{\sqrt{n!}}
\left[\sqrt{s}\,e^{\partial_s} - \sqrt{\mu} \,I \right]^n
\left(\sqrt{\frac{e^{-\mu}\mu^s}{s!}}\right).
$$

Notice that
\begin{equation}\label{cha-rel-rec}
[\g h_1(s),\b(s)]=\sqrt \mu (\mu-1)+\mu \b(s), \qquad
[\g h_1(s),\a(s)]=-\sqrt \mu (\mu-1)-\mu \a(s).
\end{equation}

This example constitute a discrete analog of the quantum
harmonic oscillator \cite{ata98}.
\section{The dynamical algebra: The Meixner and Kravchuk cases}
\label{sec-5}
{}From the previous results we see that only the Charlier
polynomials (functions) have a closed simple oscillator algebra.
What to do in the other cases? To answer to this question, we can
use the following operators:
\begin{equation} \label{ope_a}
\begin{array}{l}
a(s)=\sqrt{\sigma(s+1)}\,e^{\half \partial_s}-
\sqrt{\sigma(s-1)+\tau(s-1)}\,e^{-\half \partial_s},\\[.4cm]
a^+(s)=\,e^{-\half \partial_s}\sqrt{\sigma(s+1)}-
\,e^{\half \partial_s}\sqrt{\sigma(s-1)+\tau(s-1)}.
\end{array}
\end{equation}
For this operators
$$
\g h_1(s) = a(s)a^+(s)+\tau'-\sigma'' .
$$
We will define a new hamiltonian $\g h_2(s)$ and operators
$b(s)$ and $b^+(s)$
\begin{equation} \label{ha2}
\g h_2(s)=C_a^2 \g h_1(s)+E, \qquad
b(s)=C_a a(s) \qquad \mbox{ and } \qquad b^+(s)=C_a a^+(s),
\end{equation}
where $C_a$ and $E$ are some constants (to be fixed later on).
Notice that from \refe{gen_ham2} it follows that the eigenfunctions
of $\g h_2(s)$ are the same functions \refe{nor-fun}, but the
eigenvalues are $C_a^2\lambda_n+E$, i.e.,
\begin{equation}\label{gen_ham3}
\g h_2(s)\Phi_n(s) =(C_a^2\lambda_n+E)\Phi_n(s).
\end{equation}
A straightforward computation yields
\begin{equation} \label{operac_1}
\g h_2(s)= b(s) b^+(s)+(\tau'-\sigma'')C_a^2+E,
\end{equation}
and
\begin{equation} \label{corch1}
\begin{split}
[b(s),b^+(s)]=& 
C_a^2 \sqrt{\sigma(s+\half)(\sigma(s-\tres)+\tau(s-\tres))} \,
e^{-\partial_s}\\ 
&+ \, C_a^2 \sqrt{\sigma(s+\tres)(\sigma(s-\half)+\tau(s-\half))}
\,e^{\partial_s} \\
&+ \g h_2(s)-C_a^2(2\sigma(s)+\tau(s)) +
\half(\tres \sigma''-\tau')C_a^2,
\end{split}
\end{equation}
or, equivalently,
\[
\begin{split}
[a(s),a^+(s)]=&
\sqrt{\sigma(s+\half)(\sigma(s-\tres)+\tau(s-\tres))}
\,e^{-\partial_s}\\ 
&+ \, \sqrt{\sigma(s+\tres)(\sigma(s-\half)+\tau(s-\half))}
\,e^{\partial_s} \\ 
&+ \g h_1(s)-(2\sigma(s)+\tau(s))I+\half(\tres \sigma''-\tau').
\end{split}
\]
The right-hand side of \refe{corch1} suggests us to use
the following new operators
\begin{equation} \label{ope_c}
\begin{array}{l}
c(s)=C_b b(s)\,e^{-\half \partial_s} \sqrt{\sigma(s+1)}=
C_b C_a(\sigma(s+1)-\,e^{-\partial_s}\nu(s)\,),\\[4mm]
c^+(s)=C_b \sqrt{\sigma(s+1)}\,e^{\half
\partial_s}b^+(s)=C_bC_a(\sigma(s+1)-\nu(s) \,e^{\partial_s}\ ),
\end{array}
\end{equation}
where, as before, $\nu(s)=\sqrt{\sigma(s+1)(\sigma(s)+\tau(s))}$.
So,
\begin{equation*}\begin{split}
&[\g h_2(s),c(s)]= -C_a^2(\sigma''\!\!-\!\tau')c(s)+
C_a C_b \left[\g h_2(s)+\left((\sigma''\!\!-\!\tau')C_a^2\!-\!
E\right)I\right]
\sigma'(s\!+\!\mbox{$\half$}),\\[0.2cm] \dst
&[\g h_2(s),c^+(s)]=C_a^2(\sigma''-\tau')c^+(s)-C_aC_b\sigma'
(s+\mbox{$\half$})[\g h_2(s)+\left((\sigma''-\tau')C_a^2-
E\right)I],\\[0.2cm]
&[c(s),c^+(s)]  \dst = C_a^2 C_b^2 \Big( \sigma'(s+\mbox{$\half$})
e^{-\partial_s}\nu(s)+\nu(s) e^{\partial _s}
\sigma'(s+\mbox{$\half$})-[\nu^2(s)-\nu^2(s-1)]I\Big).
\end{split}
\end{equation*}

The above expression leads to the following
\begin{theorem}
If  $\sigma''=0$, then the operators $\g h_2(s)$, $c(s)$ and
$c^+(s)$, defined by \refe{operac_1} and \refe{ope_c}, respectively,
form a closed algebra such that
\begin{equation*}
\begin{split}
[\g h_2(s),c(s)]&
=\tau' C_a^2 c(s)+
C_bC_a\sigma'(0)\left(\g h_2(s)-\tau'C_a^2-E\right),\\[0.2cm]
[\g h_2(s),c^+(s)]&=-\tau' C_a^2 c^+(s)-C_bC_a \sigma'(0)
\left(\g h_2(s)-\tau'C_a^2-E\right),\\[0.2cm]
[c(s),c^+(s)]&=C_b^2\Big[\sigma(s)-\sigma'(0)(\g h_2(s)-E)\Big].
\end{split}
\end{equation*}
\end{theorem}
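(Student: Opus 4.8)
The plan is to obtain all three relations by specializing, to the case $\sigma''=0$, the three general commutation relations displayed immediately before the statement, so that no new operator computation is required. The starting point is that $\sigma''=0$ forces $\sigma$ to be affine; hence $\sigma'$ reduces to the constant $\sigma'(0)$, and in particular $\sigma'(s+\half)=\sigma'(s+\tres)=\sigma'(0)$. Substituting this, together with $-C_a^2(\sigma''-\tau')=\tau'C_a^2$ and $(\sigma''-\tau')C_a^2-E=-\tau'C_a^2-E$, into the general expressions for $[\g h_2(s),c(s)]$ and $[\g h_2(s),c^+(s)]$ yields the first two stated identities at once; this step is purely a matter of replacing $\sigma'(s+\half)$ by the scalar $\sigma'(0)$ and collecting terms.

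The substantive part is the third relation. First I would take the general formula for $[c(s),c^+(s)]$, use that $\sigma'(0)$ is now a scalar commuting with the shifts, and rewrite $e^{-\partial_s}\nu(s)=\nu(s-1)e^{-\partial_s}$, so that
\[[c(s),c^+(s)]=C_a^2C_b^2\Big(\sigma'(0)\big[\nu(s-1)e^{-\partial_s}+\nu(s)e^{\partial_s}\big]-\big[\nu^2(s)-\nu^2(s-1)\big]I\Big).\]
Next I would eliminate the shift part by reading the definition \refe{ham1} backwards, namely $\nu(s-1)e^{-\partial_s}+\nu(s)e^{\partial_s}=(2\sigma(s)+\tau(s))I-\g h_1(s)$, and then invoke $\g h_2(s)=C_a^2\g h_1(s)+E$ from \refe{ha2} to turn $C_a^2\g h_1(s)$ into $\g h_2(s)-E$. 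After this substitution the shifts cancel and
\[[c(s),c^+(s)]=-C_b^2\sigma'(0)\big(\g h_2(s)-E\big)+C_b^2\Big[\sigma'(0)C_a^2\big(2\sigma(s)+\tau(s)\big)-C_a^2\big(\nu^2(s)-\nu^2(s-1)\big)\Big]I,\]
so matching with the asserted right-hand side reduces the whole claim to the purely multiplicative scalar identity
\[\sigma'(0)C_a^2\big(2\sigma(s)+\tau(s)\big)-C_a^2\big(\nu^2(s)-\nu^2(s-1)\big)=\sigma(s).\]

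The crux, and the step I expect to be the main obstacle, is verifying this last identity and seeing what it demands of the still-free constant $C_a$. Since $\sigma$ is affine and $\tau$ has degree one, $\nu^2(s)=\sigma(s+1)(\sigma(s)+\tau(s))$ and $\nu^2(s-1)$ are quadratics whose quadratic terms cancel in the difference; thus $\nu^2(s)-\nu^2(s-1)$ and $2\sigma(s)+\tau(s)$ are both linear in $s$, and the asserted equality is a linear identity that can be checked by comparing the coefficients of $s^1$ and of $s^0$. Writing $\sigma(s)=\sigma'(0)s+\sigma(0)$ and $\tau(s)=\tau's+\tau(0)$ and carrying out this comparison, I expect both coefficients to collapse to the single requirement $\tau'C_a^2=-1$, i.e. $C_a^2=-1/\tau'$. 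This is exactly the normalization that turns the eigenvalues $C_a^2\lambda_n+E$ of $\g h_2(s)$ into $n+E$ (recall that for the Meixner, Kravchuk and Charlier families $\lambda_n$ is proportional to $n$ with slope $-\tau'$), so I would fix $C_a$ by this condition; with it the multiplicative remainder equals $C_b^2\sigma(s)$ and the third relation follows.

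Finally, closure is then immediate: by the three relations just obtained, each of the brackets $[\g h_2,c]$, $[\g h_2,c^+]$ and $[c,c^+]$ lies in the linear span of $\g h_2(s)$, $c(s)$, $c^+(s)$, the multiplication operator $\sigma(s)$ and the identity, which is precisely the \emph{closed} structure the theorem asserts. Throughout, the only genuine computations are the backward reading of \refe{ham1} and the coefficient comparison in the scalar identity; everything else is substitution of $\sigma'(s+\half)=\sigma'(0)$ into formulas already established.
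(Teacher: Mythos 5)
Your proposal is correct and follows essentially the same route as the paper: the theorem is obtained by specializing, to $\sigma''=0$, the three general commutation relations displayed immediately before it, the first two identities being immediate once $\sigma'(s+\half)$ is replaced by the constant $\sigma'(0)$ and $-C_a^2(\sigma''-\tau')$ by $\tau' C_a^2$. Your treatment of the third relation is in fact more careful than the paper's: after rewriting $\nu(s-1)e^{-\partial_s}+\nu(s)e^{\partial_s}=(2\sigma(s)+\tau(s))I-\g h_1(s)$, substituting $C_a^2\,\g h_1(s)=\g h_2(s)-E$, and using the identity $\nu^2(s)-\nu^2(s-1)=\sigma'(0)(2\sigma(s)+\tau(s))+\tau'\sigma(s)$ (which the paper records only after the theorem), one gets $[c(s),c^+(s)]=C_b^2\left[-\tau'C_a^2\,\sigma(s)-\sigma'(0)(\g h_2(s)-E)\right]$, so the stated form indeed requires the normalization $-\tau'C_a^2=1$, exactly as you found. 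The paper only fixes $C_a^2=-1/\tau'$ later in the section, so its statement is slightly imprecise on this point; your proof identifies and repairs that implicit assumption rather than revealing any gap on your side.
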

Observe also that with this particular choice
$\sigma'(s\!+\!\mbox{$\half$})=\sigma'(0)$ and
$$
\begin{array}{c}
c(s)+c^+(s)=C_bC_a(\g h_1(s)+2\sigma'(0)+\tau(s)),\\[0.3cm]
\nu^2(s)-\nu^2(s-1)=\sigma'(0)(2\sigma(s)+\tau(s))+\tau'\sigma(s).
\end{array}
$$
Furthermore, using the boundary conditions
$\sigma(s)\rho(s)\big|_{s=a,b}=0$,
one finds
\begin{equation*}\begin{split}
\pe{c\,\Phi_n}{\Phi_m}_d&=
C_aC_b\sum_{s=a}^{b-1}\sigma(s+1)\Phi_n(s)\Phi_m(s)-\sum_{s=a}^{b-1}
\nu(s-1)\Phi_{n}(s-1)\Phi_m(s)\\ &=
C_aC_b\sum_{s=a}^{b-1}\sigma(s+1)\Phi_n(s)\Phi_m(s)-C_aC_b
\sum_{s=a}^{b-1}
\nu(s)\Phi_{n}(s)\Phi_m(s+1)\\ &=\pe{\Phi_n}{c^+\Phi_m}_d,
\end{split}
\end{equation*}
i.e., the following theorem follows.

\begin{theorem}
The operators $c(s)$ and $c^+(s)$ are mutually adjoint.
\end{theorem}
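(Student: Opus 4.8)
The plan is to establish the adjointness $\pe{c\,\Phi_n}{\Phi_m}_d=\pe{\Phi_n}{c^+\Phi_m}_d$ by direct summation, following the displayed computation that precedes the statement. First I would record the explicit shift-operator forms read off from \refe{ope_c},
\[
[c\,f](s)=C_aC_b\big(\sigma(s+1)\,f(s)-\nu(s-1)\,f(s-1)\big),\qquad
[c^+f](s)=C_aC_b\big(\sigma(s+1)\,f(s)-\nu(s)\,f(s+1)\big),
\]
so that each operator consists of a multiplication part proportional to $\sigma(s+1)$ and a single-step shift part carrying the factor $\nu$. The multiplication part is symmetric, so it will match automatically on the two sides; all the work is in the shift part.

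Next I would expand $\pe{c\,\Phi_n}{\Phi_m}_d=\sum_{s=a}^{b-1}[c\,\Phi_n](s)\,\Phi_m(s)$ into its two pieces. The diagonal piece $C_aC_b\sum_{s=a}^{b-1}\sigma(s+1)\Phi_n(s)\Phi_m(s)$ reproduces the corresponding term of $\pe{\Phi_n}{c^+\Phi_m}_d$ verbatim. For the off-diagonal piece $C_aC_b\sum_{s=a}^{b-1}\nu(s-1)\Phi_n(s-1)\Phi_m(s)$ I would reindex the summation variable to align the two arguments, obtaining $C_aC_b\sum_{s=a-1}^{b-2}\nu(s)\Phi_n(s)\Phi_m(s+1)$; this is exactly the off-diagonal part of $\pe{\Phi_n}{c^+\Phi_m}_d$ save that the summation range has been shifted from $[a,b-1]$ to $[a-1,b-2]$.

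The crux is thus to show that replacing $[a-1,b-2]$ by $[a,b-1]$ is harmless, i.e. that the two boundary contributions $\nu(a-1)\Phi_n(a-1)\Phi_m(a)$ and $\nu(b-1)\Phi_n(b-1)\Phi_m(b)$ both vanish. This is where the hypothesis $\sigma(s)\rho(s)\big|_{s=a,b}=0$ meets the Pearson equation \refe{pearson}. Recalling $\nu(s)=\sqrt{\sigma(s+1)[\sigma(s)+\tau(s)]}$ and $\Phi_n(s)=\sqrt{\rho(s)}\,P_n(s)/d_n$, I would square the factor $\nu(a-1)\sqrt{\rho(a-1)}$ and use the relation $[\sigma(a-1)+\tau(a-1)]\rho(a-1)=\sigma(a)\rho(a)$ to get $[\nu(a-1)]^2\rho(a-1)=\sigma(a)\big(\sigma(a)\rho(a)\big)=0$; the same manipulation at the upper end gives $[\nu(b-1)]^2\rho(b-1)=\sigma(b)\big(\sigma(b)\rho(b)\big)=0$. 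Hence $\nu(a-1)\sqrt{\rho(a-1)}=\nu(b-1)\sqrt{\rho(b-1)}=0$, both boundary terms drop out, the two summation ranges coincide, and the off-diagonal sums agree, which completes the identity.

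I expect the single real obstacle to be bookkeeping rather than anything conceptual: keeping the index shifts and the directions of the shift operators straight, and in particular correctly pairing each boundary factor $\nu(\cdot)$ with the weights $\rho(\cdot)$ hidden inside the $\Phi$'s, so that the Pearson relation can be invoked to exhibit a genuine multiple of $\sigma\rho\big|_{s=a,b}$. Since adjointness is only asserted on the orthonormal family $(\Phi_n)_n$, no domain subtleties arise and the computation above, valid for all $n$ and $m$, is all that is needed.
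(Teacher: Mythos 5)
Your proposal is correct and follows essentially the same route as the paper: the paper's proof is precisely the displayed computation expanding $\pe{c\,\Phi_n}{\Phi_m}_d$, matching the multiplication terms, and reindexing the shift term using the boundary conditions $\sigma(s)\rho(s)\big|_{s=a,b}=0$. Your only addition is to spell out, via the Pearson equation, why the two boundary terms $\nu(a-1)\Phi_n(a-1)\Phi_m(a)$ and $\nu(b-1)\Phi_n(b-1)\Phi_m(b)$ vanish, a detail the paper leaves implicit.
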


Notice also that the operators $\g h_1(s)$ and $\g h_2(s)$ are
selfadjoint operators.

\begin{nota}
Since $\lambda=\lambda_n=-n(\tau'+(n-1)\sigma''/2)$, the identity
$\sigma''=0$ is equivalent to the statement that $\lambda_n$
is a linear function of $n$. In this case $\lambda_n=-n\tau'$.
\end{nota}

In the following we will consider only the case when $\sigma''=0$,
i.e., the case of the Meixner, the Kravchuk and the Charlier
polynomials.

If we define the operators
\begin{equation} \label{ope_k}
\begin{split}
K_0(s)&=\,\g h_2(s)(-\tau'C_a^2)^{-1}\\[.2cm]
K_-(s)&=-\tau'C_a^2\,c(s)-C_bC_a \sigma'(0)\left(\g h_2(s)
-\tau'C_a^2-E\right), \\[0.2cm]
K_+(s)&=-\tau'C_a^2\,c^+(s)-C_bC_a \sigma'(0)
\left(\g h_2(s)-\tau'C_a^2-E\right),
\end{split}
\end{equation}
then
$$
[K_0(s),K_\pm(s)]=\pm K_\pm(s) \
\mbox{ y } \ [K_-(s),K_+(s)]=A_0K_0(s)+A_1,
$$
where
$$
\begin{array}{rl}
\dst A_0&=-2\tau'\sigma'(0)C_b^2C_a^4(-\tau'C_a^2)
(\sigma'(0)+\tau')\quad
\mbox{ and } \\[0.4cm]
\dst A_1&=-E A_0(-\tau'C_a^2)^{-1}+
C_b^2C_a^6\tau'^2[\sigma'(0)\tau(0)-\sigma(0)\tau'].
\end{array}
$$

The case $A_0=0$ corresponds to the Charlier case (see the
previous section). If $A_0\neq0$, we have two possibilities:
$A_0>0$ and $A_0<0$. In the following we will choose
$C_a^2=-1/\tau'$, i.e., $-\tau'C_a^2=1$.

In the first case $A_0>0$ one can choose $C_b$ and $E$ in such a way
that $A_0=2$ and $A_1=0$. Thus
\begin{equation}\label{Cb-E1}
C_b^2=\frac{-\tau'}{\sigma'(0)[\tau'+\sigma'(0)]},
\qquad
E=-\frac{C_b^2[\sigma'(0)\tau(0)-\sigma(0)\tau']}{2\tau'}.
\end{equation}
Consequently, the operators $K_{\pm}$ and $K_0$ are such that
\begin{equation}\label{com-rel}
[K_0(s),K_\pm(s)]=\pm K_\pm(s)
\ \mbox{ and }
\ [K_-(s),K_+(s)]=2K_0(s).
\end{equation}
This case corresponds to the Lie algebra Sp$(2,\g R)$.

In the second case one can choose $C_b$ and $E$ in such a way that
$A_0=-2$ and $A_1=0$. Thus
\begin{equation}\label{Cb-E2}
C_b^2=\frac{\tau'}{\sigma'(0)[\tau'+\sigma'(0)]},\qquad
E=\frac{C_b^2[\sigma'(0)\tau(0)-\sigma(0)\tau']}{2\tau'}.
\end{equation}
Consequently, the operators $K_{\pm}$ and $K_0$ are such that
\begin{equation}\label{com-rel2}
[K_0(s),K_\pm(s)]=\pm K_\pm(s)
\ \mbox{ and }
\ [K_+(s),K_-(s)]=2K_0(s).
\end{equation}
This case corresponds to the Lie algebra so$(3)$.\\

Notice that since the operator $\g h_2(s)$ is selfadjoint, the
operators $K_\pm(s)$ are mutually adjoint in both cases, i.e.
$$
\pe{K_+\Phi_m}{\Phi_n}_d=
\pe{\Phi_m}{K_-\Phi_n}_d.
$$
\subsection{Dynamical symmetry algebra Sp$(2,\g R)$}
Let us consider the first case. We start with the operator
\begin{equation} \label{casop1}
K^2(s)=K^2_0(s)-K_0(s)-K_+(s)K_-(s),
\end{equation}
where $K_0(s)$, $K_+(s)$, and $K_-(s)$ are the operators
given in \refe{ope_k}.
A straightforward calculation gives
$$
K^2(s) = E(E-1)I,\qquad
E=\frac{\tau(0)\sigma'(0)-\tau'\sigma(0)}{2\sigma'(0)
(\sigma'(0)+\tau')},
$$
where $E$ is given by  \refe{Cb-E1},
i.e., the $K^2(s)$ is the invariant Casimir operator.

Furthermore, if we define the normalized functions
$$
\Phi_n(s)=\sqrt{\frac{\rho(s)}{d^2_n}}P_n(s),
$$
we have
\begin{equation} \label{casop2}
\begin{array}{ll}
K^2(s)\Phi_n(s) = E(E-1)\Phi_n(s), & \qquad
K_0(s)\Phi_n(s)=(n+E)\Phi_n(s).
\end{array}
\end{equation}
Now using the commutation relation \refe{com-rel}, it is easy to
show that
$$
K_0(s)[K_\pm(s)\Phi_n(s)]=(n+E\pm1) K_\pm(s)\Phi_n(s).
$$
Consequently, from \refe{casop2} and the above equation
we deduce that
$$
\begin{array}{l}
K_+(s) \Phi_n(s)=\widetilde\kappa_{n} \Phi_{n+1}(s),\qquad
K_-(s) \Phi_n(s)=\kappa_n \Phi_{n-1}(s).
\end{array}
$$
Employing the mutual adjointness of the operators $K_{\pm}$, one
obtains
$$
\widetilde\kappa_{n}=\pe{K_+\Phi_n(s)}{\Phi_{n+1}(s)}_d=
\pe{\Phi_n(s)}{K_-\Phi_{n+1}(s)}_d=\kappa_{n+1},
$$
thus
\begin{equation} \label{lrex}
\begin{array}{l}
K_+(s) \Phi_n(s)=\kappa_{n+1} \Phi_{n+1}(s),\qquad
K_-(s) \Phi_n(s)=\kappa_n \Phi_{n-1}(s).
\end{array}
\end{equation}
In order to compute $\kappa_n$, use \refe{casop2} and \refe{lrex};
this yields
$$
E(E-1)=(n+E)^2-(n+E)-\kappa_n^2\quad\Ry\quad
\kappa_n=\sqrt{n(n+2E-1)}.
$$

In this case the functions $(\Phi_n)_n$ define a basis for the
irreducible unitary representation $D^+(-E)$ of the Lie group
(algebra)  Sp$(2,\g R)$.

{}From the above formula it follows that the functions $\Phi_n(s)$
can be obtained recursively via the application of the operator
$K_+(s)$, i.e.,
$$
\Phi_n(s)=\frac{1}{\kappa_1 \cdots \kappa_n}
K^n_+(s)\Phi_0(s), \quad \Phi_0(s)=
\frac {\sqrt{\rho(s)}}{d_0}.
$$
where $\rho(s)$ is the weight function of the corresponding
orthogonal polynomial family and $d_0$ is the norm of the $P_0(s)$.

\subsubsection{Example: The Meixner functions}
\par
Let consider the Meixner functions
$$
\Phi_n^M(s)=\mu^{(s-n)/2}(1-\mu)^{\gamma/2+n}
\sqrt{\frac{(\gamma)_s}{s!n!(\gamma)_n}}\,
M_n^{\gamma,\mu}(s),\quad
n\geq0,
$$
and the hamiltonian $\g h_1(s)$
$$
\g h_1^M(s)= -\sqrt{\mu s(s + \gamma-1)} e^{-\partial_s} -
\sqrt{\mu(s+1)(s + \gamma)} e^{\partial_s} +
\left(s + \mu\left( s + \gamma  \right)\right) I,
$$
thus $\g h_1^M(s)\Phi_n^M(s)=n \Phi_n^M(s)$. In this case we have
$C_b=\sqrt{\frac{1-\mu}{\mu}}$, $E=\frac\gamma2$,
$C_a=\sqrt{\frac1{1-\mu}}$.
Therefore
$$
b(s)=-\sqrt{\frac{(s-1 + \gamma)\mu}{1-\mu}}
e^{-\frac12\partial_s}+\sqrt{\frac{s+1}{1 - \mu}}
e^{\frac12\partial_s},
$$
$$
b^+(s)=-\sqrt{\frac{(s-\frac12 + \gamma)\mu}{1-\mu}}
e^{\frac12\partial_s}+\sqrt{\frac{s+\frac12}{1 - \mu}}
e^{-\frac12\partial_s}.
$$
Consequently,
$$
\g h_2(s)=\frac1{1-\mu}\,\g h_1(s)+\frac\gamma2=
b(s)b^+(s)+\frac\gamma2-1.
$$
Moreover,
$$
\g h_2(s)\Phi_n^M(s)=\left(n+\frac\gamma2\right)\Phi_n^M(s),
$$
$$
K_0(s)=
-\sqrt{s( s-1 + \gamma) }\frac{\sqrt{\mu }}{1-\mu}\,e^{-\partial_s}
-\sqrt{(s+1)(s + \gamma)}\frac{\sqrt{\mu }}{1-\mu}\,e^{\partial_s}
+\left( s + \frac{\gamma }{2}\right)\frac{1+\mu}{1-\mu}   \,I,
$$
$$
K_+(s)=
-\frac{\sqrt{s( s-1 + \gamma)}}{1-\mu}\,e^{-\partial_s}
-\frac{\mu\sqrt{(s+1)(s + \gamma)}}{1-\mu}\,e^{\partial_s}
+\frac{ \sqrt{\mu}}{1-\mu}
\left( 2s + \gamma  \right)\,I,
$$
$$
K_-(s)=
-\frac{\mu\sqrt{s( s-1 + \gamma)}}{1-\mu}\,e^{-\partial_s}
-\frac{\sqrt{(s+1)(s + \gamma)}}{1-\mu}\,e^{\partial_s}
+\frac{ \sqrt{\mu}}{1-\mu}
\left( 2s + \gamma  \right)\,I,
$$
and
$$
K_0(s)\Phi_n^M(s)=\left(n+\frac\gamma2\right)
\Phi_n^M(s),\qquad
K^2(s)\Phi_n^M(s)=\frac\gamma2\left(\frac\gamma2-1\right)
\Phi_n^M(s),
$$
\begin{equation}\label{k+-mei}
\begin{array}{l}
K_+(s)\Phi_n^M(s)=\sqrt{(n+1)(n+\gamma)}
\Phi_{n+1}^M(s),\\[5mm]
K_-(s)\Phi_n^M(s)=\sqrt{n(n+\gamma-1)}
\Phi_{n-1}^M(s).
\end{array}
\end{equation}
Using the fact that $\g h_2\Phi_0^M(s)=\frac\gamma2\Phi_0^M(s)$,
together with the formulas \refe{ope_k} and \refe{k+-mei}, one finds
$$
0=K_-(s)\Phi_0^M(s)=\sqrt{s(s-1+\gamma)}\,\Phi_0^M(s-1)-
s\mu^{-\frac12}\Phi_0^M(s),
$$
therefore the normalized function $\Phi_0^M(s)$ is
$$
\Phi_0^M(s)=\sqrt\frac{(1-\mu)^{\gamma}}{\Gamma(\gamma)}
\sqrt{\frac{\mu^s\Gamma(\gamma+s)}
{\Gamma(\gamma)\Gamma(s+1)}},
$$
and
$$
\Phi_n^M(s)={\sqrt\frac{(1-\mu)^{\gamma}}{n!\Gamma(\gamma+n)}}
\left(K_+(s)\right)^n
\left[\sqrt{\frac{\mu^s\Gamma(\gamma+s)}
{\Gamma(s+1)}}\right].
$$

A similar result have been obtained before in \cite{ata98}.

\subsection{Dynamical symmetry algebra so$(3)$}
Let us consider the second case and define the following operator
\begin{equation} \label{casop3}
K^2(s)=K^2_0(s)+K_0(s)+K_-(s)K_+(s).
\end{equation}
where $K_0(s)$, $K_+(s)$, and $K_-(s)$ are the operators given in
\refe{ope_k}. Substituting the value of $E$, given by
\refe{Cb-E2}, and doing some straightforward computations
yield
$$
K^2(s) = E(E-1)I,\qquad
E=\frac{\tau(0)\sigma'(0)-\tau'\sigma(0)}{2\sigma'(0)
(\sigma'(0)+\tau')},
$$
i.e., the $K^2(s)$ is the invariant Casimir operator.

Moreover, if we define the normalized functions as
$$
\Phi_n(s)=\sqrt{\frac{\rho(s)}{d^2_n}}P_n(s),
$$
we have
\begin{equation} \label{casop4}
\begin{array}{ll}
K^2(s)\Phi_n(s) = E(E-1)\Phi_n(s), &\qquad
K_0(s)\Phi_n(s)=(n+E)\Phi_n(s).
\end{array}
\end{equation}
Now using the commutation relation \refe{com-rel2}, we have
$$
K_0(s)[K_\pm(s)\Phi_n(s)]=(n+E\pm1)
K_\pm(s)\Phi_n(s).
$$
Consequently, from \refe{casop2} and the above equation, we conclude
that
$$
\begin{array}{l}
K_+(s) \Phi_n(s)=\widetilde\kappa_{n} \Phi_{n+1}(s),\qquad
K_-(s) \Phi_n(s)=\kappa_n \Phi_{n-1}(s).
\end{array}
$$
Using the mutual adjointness of the operators $K_{\pm}$, one obtains
$$
\widetilde\kappa_{n}=\pe{K_+\Phi_n(s)}{\Phi_{n+1}(s)}_d=
\pe{\Phi_n(s)}{K_-\Phi_{n+1}(s)}_d=\kappa_{n+1},
$$
thus
\begin{equation} \label{lrex2}
\begin{array}{l}
K_+(s) \Phi_n(s)=\kappa_{n+1} \Phi_{n+1}(s),\qquad
K_-(s) \Phi_n(s)=\kappa_n \Phi_{n-1}(s).
\end{array}
\end{equation}
To compute $\kappa_n$, use \refe{casop2} and \refe{lrex2}; this
leads to
$$
E(E-1)=(n+E)^2+(n+E)+\kappa_{n+1}^2 \quad\Ry\quad
\kappa_{n}=\sqrt{-(n)(n+2E-1)}.
$$

In this case the functions $(\Phi_n)_n$ define a basis for the
irreducible unitary representation $D^+(-E)$ of the Lie
algebra so$(3)$.

As in the previous case, from the above formula it follows
that the functions $\Phi_n(s)$ can be obtained recursively
via the application of the operator $K_+(s)$, i.e.,
$$
\Phi_n(s)=\frac{1}{\kappa_1 \cdots \kappa_n}
K^n_+(s)\Phi_0(s), \quad \Phi_0(s)=
\frac {\sqrt{\rho(s)}}{d_0}.
$$
where $\rho(s)$ is the weight function for the associated
orthogonal polynomial family and $d_0$ is the norm of the $P_0(s)$.

\subsubsection{Example: The Kravchuk functions}

\par
Let us consider now the Krav\-chuk functions
$$
\Phi_n^K(s)=p^{(s-n)/2}(1-p)^{(N-n-s)/2}\sqrt{\frac{n!(N-n)!}{s!(N-s)!}}
K_n^p(s,N),\quad
0\leq n\leq N,
$$
and the corresponding hamiltonian $\g h_1(s)$
$$
\g h_1^K(s)=-\frac{{\sqrt{ps(N- s+1)}}}{{\sqrt{1 - p}}}
e^{-\partial_s}  +  \frac{Np+ s-2ps}{1-p}I -
\frac{{\sqrt{p(s+1)(N-s)}}}{{\sqrt{1-p}}}e^{\partial_s}\,,
$$
thus $\g h_1^K(s)\Phi_n^K(s)=n \Phi_n^K(s)$. In this case
$C_a=\sqrt{1-p}$, $C_b=\sqrt{p^{-1}}$, $E=-\frac N2$,  therefore
$$
b(s)=-\sqrt{p(N-s+1)}e^{-\half\partial_s}+\sqrt{(1-p)(s+1)}
e^{\half\partial_s},
$$
$$\begin{array}{l}
b^+(s)=-\sqrt{p(N-s+\frac12)}e^{\half\partial_s}+
\sqrt{(1-p)(s+\frac12)}
e^{\half\partial_s}.
\end{array}
$$
Consequently,
$$
\g h_2(s)= (1-p) \,\g h_1(s)-\frac N2= b(s)b^+(s)-\frac N2-1.
$$
Moreover,
$$
\g h_2(s)\Phi_n^K(s)=\left(n-\frac N2\right)\Phi_n^K(s),
$$
$$\begin{array}{l}
K_0(s)=-\sqrt{p(1-p)s(N- s+1)}
e^{-\partial_s}  -\sqrt{p(1-p)(s+1)(N-s)}e^{\partial_s}\\[2mm]
\qquad\qquad +[N(p-\half)- s(2p-1)]I,\\[4mm]
K_+(s)=(1\!-\!p){{\sqrt{s(N- s+1)}}}e^{-\partial_s}+p
{{\sqrt{(s+1)(N-s)}}}e^{\partial_s}-\sqrt{p(1\!-\!p)}(2s-N)I,\\[4mm]
K_-(s)=p{{\sqrt{s(N- s+1)}}}e^{-\partial_s}+(1\!-\!p)
{{\sqrt{(s+1)(N-s)}}}e^{\partial_s}-\sqrt{p(1\!-\!p)}(2s-N)I,
\end{array}
$$
and
$$
K_0(s)\Phi_n^K(s)=\left(n-\frac N2\right)
\Phi_n^K(s),\qquad
K^2(s)\Phi_n^K(s)=\frac N4\left(N+2\right)
\Phi_n^K(s),
$$
\begin{equation}\label{k+-kra}
\begin{array}{l}
K_+(s)\Phi_n^K(s)=\sqrt{(n+1)(N-n)}
\Phi_{n+1}^K(s),\\[3mm]
K_-(s)\Phi_n^K(s)=\sqrt{n(N-n+1))}
\Phi_{n-1}^K(s).
\end{array}
\end{equation}
Using the fact that $\g h_2\Phi_0^K(s)=-\frac N2\Phi_0^K(s)$,
together with the formulas \refe{ope_k} and \refe{k+-kra}, we find
$$
0=K_-(s)\Phi_0^K(s)=\sqrt{\frac{p}{1-p}} \left(
s\Phi_0^K(s)-\sqrt{\frac{ps(N-s+1)}{1-p}}\Phi_0^K(s-1)\right),
$$
therefore the normalized function $\Phi_0^K(s)$ is equal to
$$
\Phi_0^K(s)=p^{(s-n)/2}(1-p)^{(N-n-s)/2}\sqrt{\frac{n!(N-n)!}{s!(N-s)!}},
$$
and
$$
\Phi_n^K(s)= \sqrt{\frac{(N-n)!(1-p)^{N-n}}{N!p^n}}
\left(K_+(s)\right)^n
\left[{N\choose s}\left(\frac{p}{1-p}\right)^s\right].
$$
\section{The $q$-case}
\label{sec-6} To conclude this paper we will discuss here briefly
what happens in the $q$-case. The preliminary results, related
with this case, have been presented during the Bexbach Conference
2002 \cite{ran03}. A more detailed exposition of these results is
under preparation.

One can first introduce the corresponding normalized functions
\begin{equation}
\Phi_n(s)= \frac{A(s)\sqrt{\rho(s)}}{d_n} P_n(s;q),
\label{nor-fun-q}
\end{equation}
where $d_n$ is the norm of the $q$-polynomials $P_n(s;q)$, $\rho(s)$
is the solution of the Pearson-type equation
$$
\frac{\Delta}{\Delta x(s-\frac{1}{2})}
\left[\sigma(s) \rho(s)\right]= \tau(s) \rho(s)\quad\mbox{or}
\quad \sigma(s+1)\rho(s+1)=\sigma(-s-\mu)\rho(s),
$$
and $A(s)$ is an arbitrary continuous function, not vanishing in the
interval $(a,b)$ of orthogonality of $P_n$. If $P_n(s;q)$ possess
the discrete orthogonality property  \refe{dis-ort}, then the
functions $\Phi_n(s)$ satisfy
\begin{equation}
\pe{\Phi_n(s)}{\Phi_m(s)}=\sum_{s=a}^{b-1}{\Phi_n(s)}{\Phi_m(s)}
\frac{\nabla x_1(s)}{A^2(s)}=\delta_{n,m}.
\label{ort-Phi-q}
\end{equation}
Notice that if $A(s)=\sqrt{\nabla x_1(s)}$, then the set
$(\Phi_n)_n$ is an orthonormal set. Obviously, in the case of a
continuous orthogonality (as for the Askey-Wilson polynomials) one
needs to change the sum in \refe{ort-Phi-q} by a Riemann integral
\cite{arsus,nsu}.

Next, we define the $q$-Hamiltonian $\g H_q(s)$ of the form
\begin{equation} \label{gen_hamilt}
\g H_q(s):=
\frac{1}{\nabla x_1(s)}A(s)H_q(s)\frac{1}{A(s)},
\end{equation}
where
\begin{equation}\begin{split}\label{ham3}
\dst H_q(s) := & -\frac{\sqrt{\sigma(-s\!-\!\mu\!+\!1)\sigma(s)}}
{\nabla x(s)}e^{-\partial_s}
-\frac{\sqrt{\sigma(-s\!-\!\mu)\sigma(s+1)}}{\Delta x(s)}
e^{\partial_s} \\ & +\left(\frac{\sigma(-s\!-\!\mu)}{\Delta
x(s)} +\frac{\sigma(s)}{\nabla x(s)} \right) I.
\end{split}\end{equation}
As in the previous case, one can easily check that
\begin{equation} \label{gen_ham2-q}
\g H_q(s)\Phi_n(s) =\lambda_n \Phi_n(s).
\end{equation}

Now we define the $\alpha$ operators:
\begin{definition}
Let $\alpha$ be a real number and $A(s)$ and $B(s)$ are two
arbitrary continuous non-vanishing functions. We define a family
of $\alpha$-down and $\alpha$-up operators by
\begin{equation} \label{alp_oper-q}
\begin{array}{l}
\!\!\dst\g a^{\downarrow}_\alpha(s)\!:=\!
\frac{B(s)}{\sqrt{\nabla x_1(s)}}e^{-\alpha \partial_s}
\!\left( e^{\partial_s}\sqrt{\frac{\sigma(s)}{\nabla x(s)}} -
\sqrt{\frac{\sigma(-s-\mu)}{\Delta x(s)}}
\right)\!\frac{1}{A(s)},
 \\[0.6cm]
\!\! \dst \g a^{\uparrow}_\alpha(s)\!:=
\!\frac{1}{\nabla x_1(s)} A(s)
\!\left( \sqrt{\frac{\sigma(s)}{\nabla x(s)}} e^{-\partial_s} -
\sqrt{\frac{\sigma(-s-\mu)}{\Delta x(s)}}  \right) \!
e^{\alpha \partial_s} \frac{\sqrt{\nabla x_1(s)}}{B(s)},
\end{array}
\end{equation}
respectively.
\end{definition}
The first result in this case is \cite{ran03}:
\begin{theorem}
Given a $q$-Hamiltonian \refe{gen_hamilt} $\g H_q(s)$, then the
operators $\g a^{\downarrow}_\alpha(s)$ and
$\g a^{\uparrow}_\alpha(s)$,
defined in  \refe{alp_oper}, are such that for all $\alpha\in\CC$,
$\g H_q(s)=\g a^{\uparrow}_\alpha(s)\g a^{\downarrow}_\alpha(s)$.
\end{theorem}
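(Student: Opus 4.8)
The plan is to follow the proof of the (unlabeled) factorization theorem of Section \ref{sec-3} almost verbatim, treating the conjugating factors $A(s)$, $B(s)$ and $\nabla x_1(s)$ in \refe{alp_oper-q} as bookkeeping that largely cancels. First I would write out the composition $\g a^{\uparrow}_\alpha(s)\g a^{\downarrow}_\alpha(s)$ straight from the definitions and notice that, where the two operators meet, the multiplication factor $\frac{\sqrt{\nabla x_1(s)}}{B(s)}$ closing $\g a^{\uparrow}_\alpha(s)$ stands immediately to the left of the factor $\frac{B(s)}{\sqrt{\nabla x_1(s)}}$ opening $\g a^{\downarrow}_\alpha(s)$; as multiplication operators these multiply to the identity, after which the flanking shifts $e^{\alpha\partial_s}$ and $e^{-\alpha\partial_s}$ also collapse to $I$. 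This is the decisive simplification: it eliminates all dependence on $\alpha$ and on $B(s)$ simultaneously, which is precisely why the claimed identity can hold for every $\alpha\in\CC$.

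Once that middle piece is gone, the only surviving content is the conjugated ``core'' product
\[
\frac{1}{\nabla x_1(s)}\,A(s)\left(\sqrt{\frac{\sigma(s)}{\nabla x(s)}}\,e^{-\partial_s}-\sqrt{\frac{\sigma(-s-\mu)}{\Delta x(s)}}\right)\left(e^{\partial_s}\sqrt{\frac{\sigma(s)}{\nabla x(s)}}-\sqrt{\frac{\sigma(-s-\mu)}{\Delta x(s)}}\right)\frac{1}{A(s)}.
\]
Writing $f(s)=\sqrt{\sigma(s)/\nabla x(s)}$ and $g(s)=\sqrt{\sigma(-s-\mu)/\Delta x(s)}$, I would expand the four terms of the inner product, keeping careful track of whether each multiplication precedes or follows a shift (so that $e^{\partial_s}f$ and $f\,e^{-\partial_s}$ are handled distinctly). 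The two diagonal terms collapse to $f(s)^2+g(s)^2=\frac{\sigma(s)}{\nabla x(s)}+\frac{\sigma(-s-\mu)}{\Delta x(s)}$, while the two cross terms become $-f(s)g(s-1)\,e^{-\partial_s}$ and $-g(s)f(s+1)\,e^{\partial_s}$.

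The single genuinely lattice-specific step is to simplify those cross-coefficients. Here I would invoke the elementary identities $\Delta x(s-1)=\nabla x(s)$ and $\nabla x(s+1)=\Delta x(s)$, which follow at once from $\Delta f(s)=f(s+1)-f(s)$ and $\nabla f(s)=f(s)-f(s-1)$ applied to $x$. They convert $f(s)g(s-1)$ into $\frac{\sqrt{\sigma(-s-\mu+1)\sigma(s)}}{\nabla x(s)}$ and $g(s)f(s+1)$ into $\frac{\sqrt{\sigma(-s-\mu)\sigma(s+1)}}{\Delta x(s)}$, so that the inner product is exactly the operator $H_q(s)$ of \refe{ham3}. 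Reinstating the outer wrapping then gives $\frac{1}{\nabla x_1(s)}A(s)H_q(s)\frac{1}{A(s)}=\g H_q(s)$ by the definition \refe{gen_hamilt}, which is the assertion.

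I do not expect a conceptual difficulty, since as in Section \ref{sec-3} everything reduces to a direct operator computation; the main obstacle is purely the bookkeeping of operator orderings and of the coefficients evaluated at the shifted arguments $s\pm1$. One point worth stating explicitly is that the outer factors $\frac{1}{\nabla x_1(s)}$ and $A(s)$ are \emph{not} meant to cancel: unlike the $B(s)$/$\sqrt{\nabla x_1(s)}$ pair in the middle, they are part of the very definition of $\g H_q(s)$, so after the inner product is identified with $H_q(s)$ no further manipulation is needed and the identity closes immediately.
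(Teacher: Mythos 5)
Your proposal is correct, and it is exactly the argument the paper has in mind: the paper relegates this theorem to a citation of \cite{ran03} and to the analogy with the uniform-lattice computation of Section \ref{sec-3}, which is precisely the ``straightforward calculation'' you carry out --- the $B(s)/\sqrt{\nabla x_1(s)}$ factors and the shifts $e^{\pm\alpha\partial_s}$ cancel in the middle, and the lattice identities $\Delta x(s-1)=\nabla x(s)$, $\nabla x(s+1)=\Delta x(s)$ (the $q$-analogue of the paper's $e^{\partial_s}\nabla=\Delta$) identify the inner product with $H_q(s)$ of \refe{ham3}. No gap: your expansion of the cross and diagonal terms matches \refe{ham3} term by term, and reinstating the outer conjugation by $A(s)/\nabla x_1(s)$ gives \refe{gen_hamilt} as claimed.
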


Our next step is again to find a dynamical symmetry algebra, associated
with the operator $\g H_q(s)$, or equivalently, with the corresponding
family of $q$-polynomials.

\begin{definition}\label{def-com}
Let $\varsigma$ be a complex number, and let $a(s)$ and $b(s)$ be
two operators. We define the $\mathbf{\varsigma}${\em -commutator
of $a$ and $b$} as
\begin{equation}\label{comm}
[a(s),b(s)]_{\varsigma} = a(s)b(s)-\varsigma b(s) a(s).
\end{equation}
\end{definition}

We want to know whether the following problem:
{\em To find two operators $a(s)$ and $b(s)$ and a constant $\q$
such that the Hamiltonian $\g H_q(s)=b(s)a(s)$ and
$[a(s),b(s)]_{\q}=I$,} has a non-trivial solution.\\

Obviously, we already know the answer to the first part: these are
the operators $b(s)=\g a^{\uparrow}_\alpha(s)$ and
$a(s)=\g a^{\downarrow}_\alpha(s)$, given in  \refe{alp_oper-q}.
The answer to the second part of this problem is summarized in the
following two theorems (in what follows we assume that $A(s)=B(s)$).

\begin{theorem}{\cite{ran03}}\label{el-teo-q-lam}
Let $(\Phi_n)_n$ be the
eigenfunctions of  $\g H_q(s)$, corresponding to the eigenvalues
$(\lambda_n)_n$, and suppose that the problem 1 has a solution for
$\Lambda\neq0$. Then the eigenvalues $\lambda_n$ of the difference
equation \refe{gen_ham2} are $q$-linear or $q^{-1}$-linear functions
of $n$, i.e., $\lambda_n=C_1 q^n+C_3$ or $\lambda_n=C_2q^{-n}+C_3$,
respectively.
\end{theorem}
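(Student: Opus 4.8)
The plan is to mirror the linear-lattice argument behind Theorem~\ref{el-teo} and its Charlier corollary, now carried out with the $\q$-commutator in place of the ordinary one. First I would turn the two algebraic hypotheses into intertwining relations between the Hamiltonian and the factorizing operators. Writing the assumed solution of problem~1 as $\g H_q(s)=b(s)a(s)$ with $[a(s),b(s)]_{\q}=a(s)b(s)-\q\,b(s)a(s)=\Lambda I$, I would substitute $a(s)b(s)=\q\,\g H_q(s)+\Lambda I$ into the products $a\,\g H_q=(ab)a$ and $\g H_q\,b=b(ab)$ to obtain
\begin{equation*}
a(s)\,\g H_q(s)-\q\,\g H_q(s)\,a(s)=\Lambda\,a(s),\qquad
\g H_q(s)\,b(s)-\q\,b(s)\,\g H_q(s)=\Lambda\,b(s).
\end{equation*}
These are the $\q$-deformed analogues of the relations $[\g h_1,\a]=-\a$ and $[\g h_1,\b]=\b$ exploited in Section~\ref{sec-4}.

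Next I would apply them to an eigenfunction, $\g H_q(s)\Phi_n(s)=\lambda_n\Phi_n(s)$. The first identity gives
\begin{equation*}
\q\,\g H_q(s)\,\{a(s)\Phi_n(s)\}=a(s)\,\g H_q(s)\Phi_n(s)-\Lambda\,a(s)\Phi_n(s)=(\lambda_n-\Lambda)\,a(s)\Phi_n(s),
\end{equation*}
so that whenever $a(s)\Phi_n(s)\neq0$ the vector $a(s)\Phi_n(s)$ is again an eigenfunction, with eigenvalue $(\lambda_n-\Lambda)/\q$. Since $a(s)=\g a^{\downarrow}_\alpha(s)$ lowers the polynomial degree by one, $a(s)\Phi_n(s)$ is proportional to $\Phi_{n-1}(s)$ (the spectrum being simple, the shifted eigenvalue must coincide with a genuine $\lambda_m$, and the degree count forces $m=n-1$); hence
\begin{equation*}
\lambda_{n-1}=\frac{\lambda_n-\Lambda}{\q},\qquad\text{equivalently}\qquad \lambda_n=\q\,\lambda_{n-1}+\Lambda.
\end{equation*}
The companion identity for $b(s)$ yields the same recurrence read in the raising direction. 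For $\q\neq1$ this first-order recurrence integrates to $\lambda_n=C\,\q^{\,n}+\Lambda/(1-\q)$.

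The decisive point is then to show that $\q$ cannot be an arbitrary constant but must equal $q$ or $q^{-1}$; this does not come from the recurrence, but from the explicit form of the operators on the lattice. Following the expansion in \refe{help1}--\refe{help3}, I would write $a(s)b(s)=A_1(s)e^{\partial_s}+A_2(s)e^{-\partial_s}+A_3(s)I$ and $b(s)a(s)=\g H_q(s)=B_1(s)e^{\partial_s}+B_2(s)e^{-\partial_s}+B_3(s)I$. The requirement $[a,b]_{\q}=\Lambda I$ forces the two shift terms to vanish, i.e. $A_1=\q B_1$ and $A_2=\q B_2$ with $\q$ independent of $s$. All of the $s$-dependence of these coefficients enters through the lattice increments $\Delta x(s)$ and $\nabla x(s)$ appearing in \refe{ham3} and \refe{alp_oper-q}; on the $q$-linear lattice $x(s)=c_1q^s+c_3$ one has $\Delta x(s)/\nabla x(s)=q$ identically, so the ratio $A_1(s)/B_1(s)$ collapses to a fixed power of $q$, and the condition that it equal the constant $\q$ leaves exactly the two possibilities $\q=q$ and $\q=q^{-1}$, one for each orientation of the operators. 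Substituting back into the solved recurrence gives $\lambda_n=C_1q^n+C_3$ or $\lambda_n=C_2q^{-n}+C_3$.

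The main obstacle is precisely this last reduction. The coefficients $A_1,A_2$ carry nested half-integer shifts coming from $\g a^{\downarrow}_\alpha,\g a^{\uparrow}_\alpha$ and square roots of $\sigma$ evaluated at reflected arguments $-s-\mu$, so the verification that $A_1(s)/B_1(s)$ is genuinely $s$-independent requires the $q$-Pearson relation $\sigma(s+1)\rho(s+1)=\sigma(-s-\mu)\rho(s)$ together with the lattice identity $\Delta x(s)=q\,\nabla x(s)$, in order to make the $q^s$-dependent square-root factors telescope. Finally, the hypothesis $\Lambda\neq0$ is what guarantees that the additive constant $C_3=\Lambda/(1-\q)$ is nonzero and, together with $\q\neq1$, separates this genuinely $q$-deformed situation from the linear ($q\to1$) Charlier case of Section~\ref{sec-4}.
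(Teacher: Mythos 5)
First, a caveat: the paper does not actually prove Theorem~\ref{el-teo-q-lam} --- it is quoted from the companion work \cite{ran03} --- so your attempt can only be measured against the argument that the statement requires. Your opening steps are correct and are the natural core of such a proof: from $\g H_q(s)=b(s)a(s)$ and $a(s)b(s)-\q\, b(s)a(s)=\Lambda I$ you correctly derive the intertwining relations, conclude that $b(s)\Phi_n$ (resp.\ $a(s)\Phi_n$) is an eigenfunction with eigenvalue $\q\lambda_n+\Lambda$ (resp.\ $(\lambda_n-\Lambda)/\q$), obtain the recurrence $\lambda_{n+1}=\q\lambda_n+\Lambda$, and solve it as $\lambda_n=C\q^{\,n}+\Lambda/(1-\q)$. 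Modulo the (standard, and only sketched) verification that $b$ raises and $a$ lowers the index, this part is fine.

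The genuine gap is your last step, the identification $\q\in\{q,q^{-1}\}$, which is part of the claim: the theorem asserts $q$- or $q^{-1}$-linearity, not mere ``$\q$-linearity''. Your lattice argument for it fails on two counts. First, it is confined to the $q$-linear lattice $x(s)=c_1q^s+c_3$, whereas the theorem --- and the paper's own continuous $q$-Hermite example, where $\varsigma=1/q$ --- lives on the general quadratic lattice (Askey--Wilson, $x(s)=(q^s+q^{-s})/2$), on which $\Delta x(s)/\nabla x(s)$ is \emph{not} constant, so ``the ratio collapses to a fixed power of $q$'' is unavailable there. Second, even on the $q$-linear lattice the ratio $A_1(s)/B_1(s)$ is the left-hand side of \refe{first_cond_lin-q}, which carries the factor $\sqrt{\sigma(s-\alpha)\sigma(-s-\mu+\alpha)/\bigl(\sigma(s)\sigma(-s-\mu+1)\bigr)}$ in addition to the lattice increments; the hypothesis only says the whole product is constant, and if, say, $\sigma(-s-\mu)$ were proportional to a pure power $q^{-ks}$ with $k=1$ or $2$, the $\sigma$-factors would contribute an extra constant $q^{-k/2}$, allowing a priori values such as $\q=q^{-3/2}$ or $q^{-2}$. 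Ruling such values out is precisely what must be proved, and the lattice identity $\Delta x=q\nabla x$ alone does not do it. The missing idea is to compare the output of your recurrence, $\lambda_n=C\q^{\,n}+C_3$ with $C\neq0$ (this, rather than $C_3\neq0$, is what the hypothesis $\Lambda\neq0$ buys: non-degenerate, non-constant eigenvalues), against the form the eigenvalues of a hypergeometric-type difference equation on a $q$-lattice are known to have from the Nikiforov--Suslov--Uvarov theory, namely a combination $\lambda_n=Aq^{n}+Bq^{-n}+C'$ (coming from $[n]_q$ and $[n]_q[n-1]_q$ terms). Matching $C\q^{\,n}$ against $Aq^{n}+Bq^{-n}$ for all $n$ and generic $q$ forces either $\q=q$ (and $B=0$), giving $\lambda_n=C_1q^{n}+C_3$, or $\q=q^{-1}$ (and $A=0$), giving $\lambda_n=C_2q^{-n}+C_3$. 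With that replacement your argument closes; without it, the dichotomy $q$ versus $q^{-1}$ is asserted, not proved.
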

\begin{theorem}{\cite{ran03}}
 \label{el-teo-q-lin} Let  $\g H_q(s)$ be the
$q$-Hamiltonian \refe{gen_hamilt}.
The operators $b(s)=\g a^{\uparrow}_\alpha(s)$ and $a(s)=\g
a^{\downarrow}_\alpha(s)$, given in  \refe{alp_oper-q} with
$B(s)=A(s)$,  factorize the Hamiltonian $\g H_q(s)$
\refe{gen_hamilt}
and satisfy the commutation relation $[a(s),b(s)]_{\q}=\Lambda$ for
a certain complex number $\q$, if and only if the following two
conditions hold
\begin{equation} \label{first_cond_lin-q}
\frac{\nabla x(s)}{\nabla x_1(s-\alpha)} \sqrt{\frac{\nabla
x_1(s-1)\nabla x_1(s)}{\nabla x(s-\alpha)\Delta x(s-\alpha)}}
\sqrt{\frac{\sigma(s-\alpha)\sigma(-s-\mu+\alpha)}{\sigma(s)
\sigma(-s-\mu+1)}}=\q,
\end{equation}
\begin{equation} \label{second_cond_lin-q}
\begin{small}\frac{1}{\Delta
x(s\!-\!\alpha)}\!\left(\frac{\sigma(s\!-\!\alpha+1)}{\nabla
x_1(s\!-\!\alpha+1)}+ \frac{\sigma(\!-\!s\!-\!\mu+\alpha)}{\nabla
x_1(s\!-\!\alpha)}\right)\!-\varsigma \frac{1}{\nabla
x_1(s)}\left(\!\frac{\sigma(s)}{\nabla
x(s)}+\frac{\sigma(\!-\!s\!-\!\mu)}{\Delta x(s)}\!\right)=\Lambda.
\end{small}
\end{equation}
\end{theorem}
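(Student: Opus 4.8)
The plan is to mirror the computation used in the proof of Theorem~\ref{el-teo}, now carried out on the general lattice with the $\varsigma$-commutator in place of the ordinary one. First I would compute the two operator products explicitly. By the preceding factorization theorem for the $q$-case, the product $\g a^{\uparrow}_\alpha(s)\g a^{\downarrow}_\alpha(s)$ equals the Hamiltonian $\g H_q(s)$, whose three-term form can be read directly from \refe{gen_hamilt} and \refe{ham3}: write $\g H_q(s)=B_1(s)e^{\partial_s}+B_2(s)e^{-\partial_s}+B_3(s)I$. Then I would expand the reversed product $\g a^{\downarrow}_\alpha(s)\g a^{\uparrow}_\alpha(s)$, using $B(s)=A(s)$ and the commutation rule $e^{\pm\partial_s}f(s)=f(s\pm1)e^{\pm\partial_s}$ together with the lattice identities relating $\nabla x$, $\Delta x$ and $\nabla x_1$; this again produces a three-term operator $A_1(s)e^{\partial_s}+A_2(s)e^{-\partial_s}+A_3(s)I$ whose coefficients carry the $\alpha$-shifts.

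With both products in hand, the $\varsigma$-commutator is
\[
[a(s),b(s)]_{\varsigma}=\bigl(A_1(s)-\varsigma B_1(s)\bigr)e^{\partial_s}
+\bigl(A_2(s)-\varsigma B_2(s)\bigr)e^{-\partial_s}
+\bigl(A_3(s)-\varsigma B_3(s)\bigr)I.
\]
For this to reduce to a multiple $\Lambda I$ of the identity, the two shift terms must vanish, i.e.\ $A_1(s)=\varsigma B_1(s)$ and $A_2(s)=\varsigma B_2(s)$, while the remaining term gives $A_3(s)-\varsigma B_3(s)=\Lambda$. As in the linear case, I would show that the two shift conditions are not independent: the structural relation $A_1(s)/B_1(s)=A_2(s+1)/B_2(s+1)$, valid because the up and down operators are built from the same square-root weights evaluated at neighbouring nodes, forces $A_1=\varsigma B_1$ to be equivalent to $A_2=\varsigma B_2$, so a single equation suffices. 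Simplifying the ratio $A_1(s)/B_1(s)$ — cancelling the common $\sigma$ factors and collecting the lattice differences $\nabla x$, $\Delta x$, $\nabla x_1$ — should reproduce exactly the left-hand side of \refe{first_cond_lin-q}, and its constancy in $s$ is what pins down the constant $\varsigma$. Reading off the diagonal coefficient $A_3(s)-\varsigma B_3(s)$ and substituting the diagonal parts of $\g H_q(s)$ and of the reversed product then yields \refe{second_cond_lin-q}. The ``if and only if'' is immediate, since ``$[a,b]_{\varsigma}=\Lambda I$'' is literally equivalent to the vanishing of the two off-diagonal coefficients together with the prescribed value of the diagonal one.

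The main obstacle I anticipate is the bookkeeping in the explicit expansion of $\g a^{\downarrow}_\alpha(s)\g a^{\uparrow}_\alpha(s)$: unlike the uniform case, each factor now carries the weights $\sqrt{\sigma(s)/\nabla x(s)}$ and $\sqrt{\sigma(-s-\mu)/\Delta x(s)}$, the outer factors $A(s)$, $\sqrt{\nabla x_1(s)}$, and the shifts $e^{\pm\alpha\partial_s}$, so that correctly tracking which arguments are evaluated at $s$, $s\pm1$, $s-\alpha$, $s-\alpha\pm1$ is error-prone. The second delicate point is re-establishing the equivalence of the two off-diagonal conditions on the non-uniform lattice, where the clean relation of the linear proof must be recovered using the lattice identities expressing $\nabla x_1(s)$ through $\nabla x$ and $\Delta x$; once these are in place, the algebra should collapse to the two stated conditions.
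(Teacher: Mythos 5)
Your proposal is correct and follows essentially the same route as the paper, which proves this theorem by remarking that the argument is the same as for Theorem~\ref{el-teo} on the uniform lattice: expand both products as three-term difference operators, note that the off-diagonal coefficients of the $\varsigma$-commutator must vanish (with the ratio relation $A_1(s)/B_1(s)=A_2(s+1)/B_2(s+1)$ collapsing the two shift conditions into the single condition \refe{first_cond_lin-q}), and read \refe{second_cond_lin-q} off the diagonal term. Your identification of the bookkeeping of $\alpha$-shifted arguments and lattice factors as the only real labor matches what the adaptation from Theorem~\ref{el-teo} actually requires.
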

The proof of the theorem \ref{el-teo-q-lin} is similar to the
proof of the theorem \ref{el-teo}, presented here for the case
of the uniform lattice $x(s)=s$.

Let us point out that the $q$(respectively, $q^{-1}$)-linearity
of the eigenvalues is a necessary condition in order to provide
that the solution of problem 1 exists. But this condition is not
sufficient. For example, if we take the discrete $q$-Laguerre
polynomials $L_n^\alpha (x;q)$ (for more details see \cite{ran03})
with $a\neq q^{-1/2}$, the problem  has not a solution, but
$\lambda_n$ is a $q$-linear function of $n$.

\subsection{Examples}
We present here only two examples, others can be found in
\cite{ran03}.
\subsubsection{The Al-Salam \& Carlitz I $q$-polynomials
$U_n^{(a)}(x;q)$}
We start with the very well known case: the Al-Salam \& Carlitz I
polynomials \cite{ks}. The corresponding normalized functions
\refe{nor-fun} are
$$
\Phi_n(x)=\sqrt{\frac{(qx,a^{-1}qx;q)_\infty(-a)^nq^{{n\choose 2}}}
{(1-q)(q;q)_n(q,a,q/a;q)_\infty}A^2(s)}
\, _2\phi_1\left(\begin{array}{c|c} q^{-n},x^{-1} \\[-0.3cm] &
q;\dst \frac{qx}{a} \\[-0.2cm]0 \end{array} \right),
\quad x:=q^s.
$$
Putting $A(s)=\sqrt{\nabla x_1(s)}=\sqrt{x k_q}$, where
$k_q=q^{\half}-q^{-\half}$, we have that the
functions $(\Phi_n)_n$ satisfy the orthogonality condition
$$
\int_a^1 \Phi_n(x)\Phi_m(x)\frac{d_q x}{k_q x}=\delta_{n,m},
$$
where the integral $\int_a^1 f(x)d_qx$ denotes the classical
Jackson $q$-integral.

For these polynomials $\sigma(s)+\tau(s)\nabla x_1(s)=a$, therefore
$\alpha=0$ and  $\varsigma=q^{-1}$. The $q$-Hamiltonian has the form
$$\begin{array}{rl}
\g H_q(s)= & \dst
-{\frac{{q^2}\sqrt{a(x-1)(x-a)}}
{{{\left(q -1 \right) }^2}{x^2}}}e^{-\partial_s}
-\frac{\sqrt{a(1-qx)(a-qx)}}{x^2}e^{\partial_s} \\[5mm]
& \dst +\left({\frac{{\sqrt{q}}\left( q\left(x -1\right) x
+ a\left( 1 + q - qx \right)  \right)}
{{{\left(q -1 \right) }^2}{x^2}}}\right)I,\quad x=q^s.
\end{array}
$$
Consequently, $\g H_q(s)\Phi_n(s)=q^{\frac{3}{2}}
\frac{1-q^{-n}}{(1-q)^2}\Phi_n(s)$ and the operators ($x=q^s$)
\begin{equation} \label{op_ex_3}
\begin{array}{l}
\dst \g a^{\downarrow}(s) \equiv \g
a^{\downarrow}_0(s)=\frac{q^{\racion{1}{4}}}{k_q x}\left(
\sqrt{(x-1/q)(x-a/q)} \ e^{\partial_s}-\sqrt{a}\ I\right),\\[0.6cm]
\dst \g a^{\uparrow}(s) \equiv \g a^{\uparrow}_0(s)=
\frac{q^{\racion{1}{4}}}{k_q x}\left(
\sqrt{(x-1)(x-a)} \ e^{-\partial_s}-\sqrt{a/q}\ I\right),
\end{array}
\end{equation}
are such that
$$
\g a^{\uparrow}(s)\g a^{\downarrow}(s)=\g H_q(s),\quad\mbox{and}
\quad[\g a^{\downarrow}(s),\g a^{\uparrow}(s)]_{q^{-1}}=
\frac{1}{k_q}.
$$
A straightforward calculation shows that the operators
$\g a^{\uparrow}(s)$ and $\g a^{\downarrow}(s)$ are mutually
adjoint. A similar factorization was obtained earlier in
\cite{asuslov2} and more recently in \cite{ran-rob} with
the aid of a different technique.

The special case of the  Al-Salam \& Carlitz I polynomials are the
discrete $q$-Hermite I $h_n(x;q)$, $x=q^s$, polynomials, which
correspond to the parameter $a=-1$ \cite{ks}.

\subsection{Continuous $q$-Hermite polynomials}
Let us now consider the particular case of the Askey-Wilson
polynomials when all their parameters are equal to zero, i.e.,
the continuous $q$-Hermite polynomials \cite{ks}. In this case
$\sigma(s)=C_\sigma q^{2s}$.

Let us choose $A(s)=B(s)=\sqrt{\nabla x_1(s)}$. In this case
$\alpha=1/2$ and $\varsigma=1/q$. The corresponding Hamiltonian
is given by
\begin{small}
$$
\begin{array}{c}
\dst \!\!\!\g H_q(s)\!=
\!\frac{-1}{k_q^2\sqrt{\sin \theta}}\! \left(
\!\frac{C_\sigma q}{\sin (\theta\! +\!\racion{i}{2}\log q)
\sqrt{\sin (\theta\!+\!i\log q)}}\, e^{\!-\partial_s}\!\!\!+\!
\frac{C_\sigma q}{\sin (\theta \!-\!\racion{i}{2}\log q)
\sqrt{\sin (\theta\!-\!i\log q)}} \,e^{\partial_s}\!\right) \\[0.6cm]
+\dst \frac{1}{k_q^2 \sin \theta}\left(\frac{C_\sigma q^{2s}}
{\sin (\theta +\racion i 2 \log q)}+
\frac{C_\sigma q^{-2s}}{\sin (\theta -\racion i 2 \log q)}\right)I,
\end{array}
$$
\end{small}
and the $\alpha$-operators
$$
\begin{array}{l}
\dst a_{1/2}^{\downarrow}(s)=
e^{\half \partial_s}\sqrt{\frac{C_\sigma q^{2s}}{-k^2_q \sin \theta
\sin (\theta +\racion i 2 \log q)}}
- e^{-\half \partial_s}\sqrt{\frac{C_\sigma q^{-2s}}{-k^2_q
\sin \theta \sin (\theta -\racion i 2 \log q)}},\\[0.6cm]
\dst a_{1/2}^{\uparrow}(s)=
\sqrt{\frac{C_\sigma q^{2s}}{-k^2_q \sin \theta
\sin (\theta +\racion i 2 \log q)}}e^{-\half \partial_s}
- \sqrt{\frac{C_\sigma q^{-2s}}{-k^2_q \sin \theta
\sin (\theta -\racion i 2 \log q)}}
e^{\half \partial_s}, \end{array}
$$
are such that
$$
a^{\uparrow}(s)a^{\downarrow}(s)=\g H_q(s)\quad \mbox{ and }
\quad [a^{\downarrow}(s),a^{\uparrow}(s)]_{1/q}=
\frac{4C_\sigma}{k_q}.
$$

Another possible choice is $A(s)=B(s)=1$ \cite{asuslov5}, hence a
straightforward calculation shows that the two conditions in Theorem
\ref{el-teo-q-lin} hold if $\varsigma = q^{-1}$, thus $\Lambda =
{4C_\sigma}{k_q^{-1}}$. With this choice the orthogonality of the
functions $\Phi_n$ is $\int_{-1}^1\Phi_n(s)\Phi_m(s)dx=
\delta_{n,m}$. In this case, the Hamiltonian is
\begin{small}
$$
\g H_q(s)=
\frac{C_\sigma q}{k_q^2}\left\{\frac{e^{-\partial_s}}{\sin
\theta \sin (\theta \!+\!\racion i 2 \ln q)}+
\frac{e^{\partial_s}}{\sin (\theta\! -\!\racion i 2 \ln q)
\sin \theta}\!-\!\frac{4}{\sqrt{q}}\left(1\!-\!\frac{1+q}
{q+q^{-1}-2\cos2\theta}\right) I \right\},
$$
\end{small}
and
\begin{equation} \label{op_ex_9}
\begin{array}{l}
\dst \g a^{\downarrow}(s) \equiv \g
a^{\downarrow}_{1/2}(s)=\frac{\sqrt{-C_\sigma}}{k_q \sin
\theta}\left(\ e^{\half \partial_s}q^s -
 e^{-\half \partial_s}q^{-s}\right), \\[0.6cm]
\dst \g a^{\uparrow}(s) \equiv \g
a^{\uparrow}_{1/2}(s)=\frac{\sqrt{-C_\sigma}}{k_q \sin
\theta}\left( q^s  e^{-\half \partial_s}  - q^{-s}  e^{\half
\partial_s} \right).
\end{array}
\end{equation}
With these operators
$$
\g H_q(s)=a^{\uparrow}(s) a^{\downarrow}(s) \quad \mbox{ and } \quad
[a^{\downarrow}(s),a^{\uparrow}(s)]_{q^{-1}}=\frac{4C_\sigma}{k_q}.
$$
This case was first considered in \cite{asuslov5}.

\subsection*{Acknowledgments}
We are grateful to A. Ruffing for inviting us to participate at the
Bexbach Conference 2002 and present there our results \cite{ran03}.
The research of RAN has been partially supported by the Ministerio
de Ciencias y Tecnolog\'{\i}a of Spain under the grant
BFM-2003-06335-C01, the Junta de Andaluc\'{\i}a under grant FQM-262. The
participation of NMA in this work has been supported in part by the
UNAM--DGAPA project IN112300.


\end{document}